\numberwithin{equation}{section}
\numberwithin{figure}{section}
\def\expo_#1{{\rm e}^{#1}}
\def\R{{\mathbb R}}
\def\C{{\mathbb C}}
\def\1{1\!{\rm l}}
\def\build#1_#2^#3{\mathrel{\mathop{\kern 0pt#1}\limits_{#2}^{#3}}}
\def\td_#1,#2{\mathrel{\mathop{\build\longrightarrow_{#1\rightarrow #2}^{}}}}
\newcommand{\ben}{\begin{equation}}
\newcommand{\een}{\end{equation}}
\newcommand{\beno}{\begin{eqnarray*}}
\newcommand{\eeno}{\end{eqnarray*}}
\newtheorem{theorem}{Theorem}
\newtheorem{corollary}{Corollary}
\newtheorem{proposition}{Proposition}
\newtheorem{lemma}{Lemma}
\newtheorem{remark}{Remark}
\date{}
\title{UG}
\newcounter{rea}
\newcounter{reb}
\newcounter{res}
\title[Korenblum and Bloch spaces ]{Some equivalent definitions for weighted Korenblum  and Bloch-type spaces of the upper half-plane}
\author{Benoit F. Sehba \address{Benoit F. Sehba, Department of Mathematics, University of Ghana, PO. Box LG 62 Legon, Accra, Ghana }}
\subjclass{30H20, 47B35, 30H30, 32A25, 42B35}
\keywords{Bergman space, Bloch space, Korenblum space, Bergman projection, Hankel operator, weight}
\begin{document}
\maketitle
\begin{abstract}
  In this paper, we prove some equivalent characterizations of weighted Korenblum spaces and Bloch spaces in terms of symbols of bounded Hankel operators.
\end{abstract}
\section{Introduction}
\medskip
In the upper half-plane $\C_+:=\{z\in\C: \Im mz>0\}$, for $\Omega$ a positive function, and $t\in\mathbb{R}$, we denote by $L_{\Omega,t}^\infty(\C_+)$, the space of all measurable functions $f$ such that
\begin{equation}\label{eq:Linfty}
\Vert f\Vert_{L_{\Omega,t}^\infty}:=\sup_{z\in\C_+}\,\left(\Im mz\right)^t\Omega(z)\vert f(z)\vert<\infty.
\end{equation}
If $\Omega(z)=1$ for all $z\in \C_+$, then the above space is just denoted $L_{t}^\infty(\C_+)$, and the corresponding norm $\Vert f\Vert_{L_t^\infty}$. 

We denote by $\mathcal H(\C_+)$ the set of holomorphic functions in $\C_+$. For $t\geq 0$, the (weighted) Korenblum space  $H_{\Omega,t}^\infty(\C_+)$ is the intersection $\mathcal H(\C_+)\cap L_{\Omega,t}^\infty(\C_+)$. 

We note that $H_0^\infty(\C_+)=H^\infty(\C_+)$ is the usual space of bounded holomorphic functions on $\C_+$. For $t>0$, the space $H_t^\infty(\C_+)$ is not trivial. Indeed, the function $f(z)=\frac{1}{(z+i)^t}$ belongs to $H_t^\infty(\C_+)$. Also, if $H^\infty(\C_+)$ contains constants, the only constant in $H_t^\infty(\C_+)$ for $t\neq 0$ is $f(z)=0$.
\vskip .1cm

Let us introduce some related classes, starting with the Bergman spaces.
Let $\alpha>-1$, and let $1\le p<\infty$. For $\Omega$ a weight on $\C_+$, we denote by $L_{\Omega,\alpha}^{p}(\C_+)$ the set of all measurable functions $f$ on $\mathbb{C}_+$ such that
$$\Vert f\Vert_{\Omega,\alpha,p}:=\left(\int_{\C_+}|f(x+iy)|^p\Omega(x+iy) y^\alpha dxdy\right)^{1/p}<\infty.$$
\medskip

 For $1\le p<\infty$ and $\alpha>-1$,
the Bergman space $A_{\Omega,\alpha}^{p}(\mathbb{C}_+)$ is the intersection $L_{\Omega,\alpha}^{p}(\mathbb{C}_+)\cap \mathcal H(\C_+)$. 
\medskip 

For $\alpha=0$, the above spaces become $L_{\Omega}^{p}(\mathbb{C}_+)$ and $A_{\Omega}^{p}(\mathbb{C}_+)$. The corresponding norm is denoted $\Vert \cdot \Vert_{\Omega,p}$.
\medskip

When $\Omega(z)=1$ for all $z\in\C_+$, the above spaces are denoted $A_{\alpha}^{p}(\mathbb{C}_+)$, and the corresponding norm is denoted $\Vert \cdot\Vert_{\alpha,p}$.
The usual Bergman spaces correspond to the case $\alpha=0$ and are just denoted $A^p(\C_+)$ and the corresponding norm $\Vert \cdot \Vert_{p}=\Vert \cdot\Vert_{0,p}$.
\medskip

It is well known (see, for example, \cite{BBGNPR}) that there exists a constant $C>0$ such that for any $f\in A_{\alpha}^{p}(\mathbb{C}_+)$ and any $z\in \C_+$,
\begin{equation}\label{eq:pointwiseberg}
\vert f(z)\vert\leq C\left(\Im mz\right)^{-\frac{2+\alpha}{p}}\Vert f\Vert_{\alpha,p}.
\end{equation}
This shows that the Bergman space $A_{\alpha}^{p}(\mathbb{C}_+)$ is continuously embedded in the Korenblum space $H_{\frac{2+\alpha}{p}}^\infty(\C_+)$.
\medskip

Another related space is the Bloch space $\mathcal{B}(\C_+)$ which is the space of all holomorphic functions $b$ on $\C_+$ such $b'\in H_{1}^\infty(\C_+)$. We endow $\mathcal{B}(\C_+)$ with the norm
$$\|b\|_{\mathcal{B}}:=\vert b(i)\vert+\Vert b'\Vert_{L_{1}^\infty}$$
and note that it is then a Banach space of functions and not equivalence classes. 
\medskip

The space $\mathcal{B}(\C_+)$ obviously contains constants. For our own convenience, we put
$$\|f\|_{\dot{\mathcal{B}}}:=\sup_{z\in \C_+}(\Im mz)|f'(z)|.$$
We know from \cite{BGS1} that there exists a constant $C>0$ such that for any Bloch function $f$, it holds that
$$\vert f(z)\vert\leq C\omega(z)\|f\|_{\mathcal{B}}$$
where $\omega(z)=\ln_+(\vert z\vert)+\ln_+(1/ \Im mz)$. Recall that for $s>0$, $\ln_+s=\max\{0,\ln s\}$.
\medskip

Let $k\in\mathbb{R}$. The $\omega^k$-Bloch space $\mathcal{B}_{\omega^k}(\C_+)$ is the space of all holomorphic functions $b$ on $\C_+$ such that $b'\in H_{\omega^k,1}^\infty(\C_+)$. We endow $\mathcal{B}_{\omega^k}(\C_+)$ with the norm
$$\|b\|_{\mathcal{B}_{\omega^k}}:=\vert b(i)\vert+\Vert b'\Vert_{L_{\omega^k,1}^\infty}$$
and note that it is then a Banach space of functions. It also contains constants, and we put
$$\|f\|_{\dot{\mathcal{B}}_{\omega^k}}:=\sup_{z\in \C_+}(\Im mz)\omega^k(z)|f'(z)|.$$

\medskip

In this note, for $\Omega=\omega^k$ for some $k\in\R$, we give equivalent definitions of the above weighted Korenblum spaces and the weighted Bloch spaces in terms of sets of symbols of bounded Hankel operators. Other equivalent definitions are in terms of dual of weighted Bergman spaces (for weighted Bloch spaces, see \cite{BGS,SH}).
\medskip

This work is a kind of spin-off of \cite{BGS} with its own perspective. It is motivated by the challenges encountered in obtaining the necessary and sufficient conditions for the symbols of Hankel operators in \cite{BGS}. 
\section{Statement of the results}
For simplicity, we will be using the notation $$dV_\alpha(z)=\left(\Im m(z)\right)^\alpha dV(z)=y^\alpha dxdy\quad \textrm{for}\quad z=x+iy.$$ 
\medskip

For $\alpha>-1$, the Bergman projection $P_\alpha$ is the orthogonal projection from $L_\alpha^2(\mathbb{C}_+)$ into its closed subspace $A_\alpha^2(\mathbb{C}_+)$. This operator is  given by $$P_\alpha f(z)=\int_{\mathbb{C}_+}K_\alpha(z,w)f(w) dV_\alpha(w)$$
where $$K_\alpha(z,w)=\frac{c_\alpha}{\left(z-\bar{w}\right)^{2+\alpha}}$$
is the Bergman kernel, and $c_\alpha$ a constant that depends only on $\alpha$. We use the notation $P=P_0$.
\medskip

It is not difficult to prove that for $\alpha>-1$ and $\beta>-1$, the Bergman projection $P_\beta$ is bounded on $L_\alpha^p(\C_+)$, $1\leq p<\infty$, if and only if
\begin{equation}\label{eq:boundednessbergcondit}
1+\alpha<p(1+\beta)
\end{equation}
(see, for example, \cite{BanSeh,BBGNPR,Sehba}). In this case, one can show that $P_\beta$ also reproduces elements of $A_\alpha^p(\C_+)$. Another consequence of this fact is that for $1\leq p<\infty$, the differentiation operator $\frac{d^k}{dz^k}$ is bounded from $A_\alpha^p(\C_+)$ to $A_{\alpha+kp}^p(\C_+)$ with the equivalence of the norms.
\medskip

 For $b$ a bounded function in $\C_+$, the Hankel operator $h_b^\alpha$ is the operator defined on $A_\alpha^2(\C_+)$ by $$h_b^{\alpha}(f)=P_{\alpha}(b\overline{f}), f\in A_\alpha^2(\C_+).$$ 
This operator is clearly well defined on $A_\alpha^2(\C_+)$. It is even bounded on $A_\alpha^2(\C_+)$, at least as long as $b$ is bounded. This means, among others, that for any $g\in A_\alpha^2(\C_+)$, we have the control
$$\vert\langle h_b^\alpha(f),g\rangle_\alpha\vert\leq C_b\Vert f\Vert_{\alpha,2}\Vert g\Vert_{\alpha,2}.$$
Here $\langle \cdot,\cdot\rangle_\alpha$ stands for the Hermitian inner product of $L_\alpha^2(\C_+)$. For $\alpha=0$, we write $h_b=h_b^0$ and $\langle\cdot,\cdot\rangle$ for the inner product of $L^2(\C_+)$.
\vskip .2cm
We want to relate the Hankel operator to the Korenblum space associated with $A_\alpha^2(\C_+)$ with $\alpha> -1$, that is, $H_t^\infty(\C_+)$ for $t$ large enough. 

We make the following observation: still for $f\in A_\alpha^2(\C_+)$ and $g\in A_\alpha^2(\C_+)$, we first recall that
$$\langle h_b^\alpha(f),g\rangle_\alpha = \langle P_\alpha(b\overline{f}),g\rangle_\alpha=\langle b,fg\rangle_\alpha.$$
This can be rewritten as 
\begin{equation}\label{eq:hankel1}\langle h_b^\alpha(f),g\rangle_\alpha =\langle (\Im m\cdot)^{\alpha-t}b,fg\rangle_t\end{equation}
where now, $\langle \cdot,\cdot\rangle_t$ stands for the Hermitian inner product of $L_t^2(\C_+)$. That is,
\begin{equation}\label{eq:innerprod1}\langle F,G\rangle_t:=\int_{\C_+} F(z)\overline{G(z)}(\Im mz)^tdV(z).\end{equation}
Returning to (\ref{eq:hankel1}), we have that $a(z)=(\Im mz)^{\alpha-t}b(z)$ belongs to $L_{t-\alpha}^\infty(\C_+)$ since $b$ was taken bounded. Using the fact that $P_t$ reproduces functions in $A_\alpha^2(\C_+)$, it follows that
\begin{equation}\label{eq:hankel11}\langle h_b^\alpha(f),g\rangle_\alpha =\langle (P_t(a\overline{f}),g\rangle_t=\langle h_{a}^t(f),g\rangle_t.\end{equation}
\vskip .1cm
We have that $P_t(a\overline{f})$ makes sense. Indeed, as $f\in A_\alpha^2(\C_+)$ and $a\in L_{t-\alpha}^\infty(\C_+)$, the function $a\overline{f}$ belongs to $\in L_{2t-\alpha}^2(\C_+)$ and $P_t$ is well defined and bounded in the last space for $t$ large enough.
\vskip .1cm
We see that since $f$ and $g$ are taken in $A_\alpha^2(\C_+)$, the term in the last equality in (\ref{eq:hankel11}) forces $h_{a}^t(f)$ to be in the predual of $A_\alpha^2(\C_+)$ with respect to the inner product (\ref{eq:innerprod1}). 
\medskip

It is known that the Hankel operator $h_b^\alpha$ is bounded on $A_\alpha^p(\C_+)$ for $1<p<\infty$, if and only if $b=P_\alpha g$ for some $g\in L^\infty(\C_+)$ (see, for example, \cite{NanaSehba}). To extend this characterization to weighted Bloch spaces in consideration, we first prove the following characterization of Korenblum spaces.
\begin{theorem}\label{thm:main2}
Let $1< p<\infty$, and let $t\geq 1$. Assume that $l,k\in\mathbb{R}$. Let $b$ be a holomorphic function on $\C_+$.  Then the following assertions are equivalent.
\begin{enumerate}
    \item[(a)] $b\in H_{\omega^{l+k},t}^\infty(\C_+)$.
    \item[(b)] The Hankel operator  $h_b^t$ is bounded from  $A_{\omega^{-lp}}^p(\C_+)$ to $A_{\omega^{kp},tp}^p(\C_+)$. Moreover,
    $$\Vert h_b^t\Vert_{A_{\omega^{-lp}}^p\to A_{\omega^{kp},tp}^p}\simeq \Vert b\Vert_{H_{\omega^{l+k},t}^\infty}.$$
\end{enumerate}
\end{theorem}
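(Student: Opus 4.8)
The plan is to prove the two implications $(a)\Rightarrow(b)$ and $(b)\Rightarrow(a)$ separately, and in each direction the crucial bridge is the identity
$$\langle h_b^t(f),g\rangle_t = \langle b,fg\rangle_t = \int_{\C_+} b(z)\,\overline{f(z)g(z)}\,(\Im mz)^t\,dV(z),$$
valid for $f,g$ in suitable Bergman spaces, which reduces the operator-norm estimate to a duality pairing against products $fg$. For $(a)\Rightarrow(b)$, I would start from $b\in H^\infty_{\omega^{l+k},t}(\C_+)$, write the pairing as above, and estimate
$$|\langle h_b^t(f),g\rangle_t|\le \|b\|_{H^\infty_{\omega^{l+k},t}}\int_{\C_+}|f(z)||g(z)|\,\omega^{-l-k}(z)\,dV(z).$$
The aim is then to bound the last integral by $\|f\|_{A^p_{\omega^{-lp}}}\|g\|_{A^{p'}_{\omega^{kp'}}}$ (the dual exponent $p'$, with the dual weight appearing via the standard $L^p$--$L^{p'}$ duality for weighted Bergman spaces on $\C_+$, using that $P_t$ is bounded on the relevant weighted $L^p$ spaces by the condition \eqref{eq:boundednessbergcondit} since $t\ge 1$); splitting $\omega^{-l-k}=\omega^{-l}\cdot\omega^{-k}$ and applying Hölder does this once one checks that $(A^p_{\omega^{-lp}})^*$ can be identified with $A^{p'}_{\omega^{kp'}}$ in the pairing $\langle\cdot,\cdot\rangle_t$ — this is where one needs $t$ large enough and where the boundedness of the Bergman projection enters. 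Since $h_b^t$ maps into the predual of $A^{p'}_{\omega^{kp'}}$, which is $A^p_{\omega^{kp},tp}$ up to the weight bookkeeping built into \eqref{eq:hankel1}, this yields the boundedness of $h_b^t$ from $A^p_{\omega^{-lp}}$ to $A^p_{\omega^{kp},tp}$ with the stated norm bound.

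For $(b)\Rightarrow(a)$, the strategy is to test $h_b^t$ against normalized reproducing kernels. Fix $w\in\C_+$ and take $f=f_w$ to be a suitably normalized Bergman kernel $K_t(\cdot,w)$ raised to an appropriate power (chosen so $f_w\in A^p_{\omega^{-lp}}$ with $\|f_w\|\simeq 1$, absorbing the $\omega$-factors at $w$), together with a companion $g=g_w$ normalized in $A^{p'}_{\omega^{kp'}}$; the reproducing property of $P_t$ then gives
$$\langle h_b^t(f_w),g_w\rangle_t \simeq b(w)\cdot(\text{explicit powers of }\Im mw\text{ and }\omega(w)),$$
so that the assumed boundedness forces $(\Im mw)^t\omega^{l+k}(w)|b(w)|\lesssim \|h_b^t\|$, i.e. $b\in H^\infty_{\omega^{l+k},t}(\C_+)$ with $\|b\|_{H^\infty_{\omega^{l+k},t}}\lesssim\|h_b^t\|$. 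The bookkeeping of the exponents — making sure the powers of $z-\bar w$ and $\Im mw$ cancel to leave exactly $(\Im mw)^{-t}\omega^{-l-k}(w)$ — is routine but must be done carefully, using the standard integral $\int_{\C_+}\frac{(\Im mz)^\beta}{|z-\bar w|^{\gamma}}dV(z)\simeq (\Im mw)^{2+\beta-\gamma}$ (valid for $\gamma$ large) and the slowly-varying / doubling behavior of $\omega$ on Bergman balls, which lets one replace $\omega(z)$ by $\omega(w)$ on the support that matters.

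The main obstacle I expect is handling the logarithmic weight $\omega(z)=\ln_+|z|+\ln_+(1/\Im mz)$ when $k$ or $l$ is negative or non-integer: $\omega$ is not comparable to a power of $\Im mz$, it is unbounded both near the boundary and at infinity, and $\omega^{-lp}$ need not be an $A_\infty$-type weight in the usual sense, so one cannot simply quote an off-the-shelf weighted Bergman-projection theorem. The resolution is to exploit that $\omega$ is \emph{slowly varying}: there is a constant $C$ with $C^{-1}\omega(w)\le\omega(z)\le C\omega(w)$ whenever $z$ lies in a fixed hyperbolic ball around $w$, and more globally $\omega(z)\le C(1+d_{\mathrm{hyp}}(z,w))\omega(w)$ with only linear growth in the hyperbolic distance; since the kernel $|K_t(z,w)|$ decays like a high power of $e^{-d_{\mathrm{hyp}}(z,w)}$ (for $t$ large), these logarithmic factors are harmless and all the integrals converge. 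I would isolate this as a short lemma on $\omega$ (doubling on balls plus a Schur-test-friendly global bound) and then feed it into an otherwise standard Schur-test argument for the direction $(a)\Rightarrow(b)$ and into the kernel estimates for $(b)\Rightarrow(a)$.
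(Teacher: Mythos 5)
Your proof is essentially correct, but the converse direction takes a genuinely different route from the paper, so let me compare. For (a)$\Rightarrow$(b) you do what the paper does: reduce to $\langle h_b^t(f),g\rangle_t=\langle b,fg\rangle_t$, split $\omega^{-l-k}=\omega^{-l}\cdot\omega^{-k}$, apply H\"older, and invoke duality; the only slip is a sign in the bookkeeping --- the space you must pair $h_b^t(f)\in A^p_{\omega^{kp},tp}$ against is its dual $A^{p'}_{\omega^{-kp'}}$ (Proposition \ref{prop:dualA22} with $k$ replaced by $-k$), not $A^{p'}_{\omega^{kp'}}$, and it is the dual of the \emph{target} space, not of $A^p_{\omega^{-lp}}$, that matters. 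For (b)$\Rightarrow$(a) the paper does not test against kernels: it uses the atomic decomposition of $A^1_{\omega^{-(l+k)}}$ (Proposition \ref{prop:atomicdecompo}) to get a weak factorization $F=\sum_j f_jg_j$ with $f_j\in A^p_{\omega^{-lp}}$, $g_j\in A^{p'}_{\omega^{-kp'}}$ and $\sum_j\|f_j\|\,\|g_j\|\lesssim\|F\|$ (Proposition \ref{prop:weakfacto0}), and then concludes via the duality $\bigl(A^1_{\omega^{-(l+k)}}\bigr)^*=H^\infty_{\omega^{l+k},t}$ (Proposition \ref{prop:dualA1}). Your kernel-testing argument is the ``single atom'' version of this: your $f_w,g_w$ are exactly the factors (\ref{eq:testfacto0}) of one atom, and Lemma \ref{lem:weightforellirudin} is precisely the normalization lemma you propose to isolate (your ``slowly varying on hyperbolic balls'' discussion is the right heuristic, but note the paper's $\omega$ can vanish, e.g.\ at $z=i$, so the clean statement is the integral bound of Lemma \ref{lem:weightforellirudin} rather than pointwise doubling). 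Your route is more elementary and bypasses the atomic decomposition entirely; what the paper's route buys is that it proves the dual-functional bound $|\langle b,F\rangle_t|\lesssim\|F\|_{A^1_{\omega^{-(l+k)}}}$ for all $F$, which is the form needed elsewhere (Theorem \ref{thm:main4}). One caveat applies to both arguments but is more visible in yours: $\langle b,f_wg_w\rangle_t$ computes $c\,(\Im m w)^t\omega^{l+k}(w)\,P_tb(w)$, so what you bound pointwise is $P_tb$, not $b$; the identification $b=P_tb$ (or the equivalence $b\in H^\infty_{\omega^{l+k},t}\Leftrightarrow P_tb\in H^\infty_{\omega^{l+k},t}$ of Remark \ref{rem:remark1}) still has to be addressed, and you should say explicitly how you pass from $P_tb$ back to $b$.
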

From the above result, we deduce the following.
\begin{corollary}\label{cor:main2}
Let $1< p<\infty$, $l,k\in\mathbb{R}$. Let $b$ be a holomorphic function on $\C_+$ such that $b=Pb$.  Then the following assertions are equivalent.
\begin{enumerate}
    \item[(a)] $b\in \mathcal{B}_{\omega^{l+k}}(\C_+)$.
    \item[(b)] The Hankel operator $h_b$ is bounded from  $A_{\omega^{-lp}}^p(\C_+)$ to $A_{\omega^{kp}}^p(\C_+)$. Moreover,
    $$\Vert b\Vert_{\dot{\mathcal B}_{\omega^{l+k}}}\lesssim \Vert h_b\Vert_{A_{\omega^{-lp}}^p\to A_{\omega^{kp}}^p}\lesssim \Vert b\Vert_{\mathcal{B}_{\omega^{l+k}}}.$$
\end{enumerate}
\end{corollary}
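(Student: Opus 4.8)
The plan is to deduce Corollary~\ref{cor:main2} from Theorem~\ref{thm:main2} applied with $t=1$ to the holomorphic symbol $b'$. Note first that, by the very definition of the weighted Bloch space, $b\in\mathcal{B}_{\omega^{l+k}}(\C_+)$ if and only if $b'\in H_{\omega^{l+k},1}^\infty(\C_+)$, and in that case $\Vert b'\Vert_{H_{\omega^{l+k},1}^\infty}=\Vert b\Vert_{\dot{\mathcal{B}}_{\omega^{l+k}}}$. On the other hand, Theorem~\ref{thm:main2} with $t=1$ and symbol $b'$ says that $b'\in H_{\omega^{l+k},1}^\infty(\C_+)$ if and only if $h_{b'}^1$ is bounded from $A_{\omega^{-lp}}^p(\C_+)$ to $A_{\omega^{kp},p}^p(\C_+)$, with $\Vert h_{b'}^1\Vert\simeq\Vert b'\Vert_{H_{\omega^{l+k},1}^\infty}$. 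So it is enough to tie boundedness of $h_b\colon A_{\omega^{-lp}}^p(\C_+)\to A_{\omega^{kp}}^p(\C_+)$ to boundedness of $h_{b'}^1\colon A_{\omega^{-lp}}^p(\C_+)\to A_{\omega^{kp},p}^p(\C_+)$, together with the matching of norms.

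The bridge between the two operators is the identity
$$\bigl(h_b(f)\bigr)'=c\,h_{b'}^1(f),$$
valid for $f$ in a dense subclass of $A_{\omega^{-lp}}^p(\C_+)$ (e.g. finite linear combinations of Bergman kernels, for which the manipulations below are legitimate), with $c\ne0$ a constant depending only on $c_0$ and $c_1$. It comes from differentiating $h_b(f)(z)=c_0\int_{\C_+}(z-\bar w)^{-2}b(w)\overline{f(w)}\,dV(w)$ in $z$ and integrating by parts in $w$, using $\frac{\partial}{\partial w}(\Im m\,w)=\frac{1}{2i}$ and $\frac{\partial}{\partial w}\bigl(b(w)\overline{f(w)}\bigr)=b'(w)\overline{f(w)}$; the boundary terms vanish because $\Im m\,w$ kills the contribution of $\partial\C_+$ and $f$ decays at infinity. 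Coupled with the weighted differentiation equivalence $\Vert g\Vert_{A_{\omega^{kp}}^p}\simeq\Vert g'\Vert_{A_{\omega^{kp},p}^p}$ for $g\in A_{\omega^{kp}}^p(\C_+)$---the $\omega^{k}$-weighted version of the fact recalled before Theorem~\ref{thm:main2}, a genuine two-sided estimate with no additive term since $\omega$ varies slowly over Bergman balls and $\C_+$ admits no nonzero constant Bergman function---this gives the implication (b)$\Rightarrow$(a) together with the lower bound: if $h_b$ is bounded then $h_b(f)\in A_{\omega^{kp}}^p$, hence $\bigl(h_b(f)\bigr)'=c\,h_{b'}^1(f)\in A_{\omega^{kp},p}^p$ with norm $\lesssim\Vert h_b\Vert\Vert f\Vert$, hence $h_{b'}^1$ is bounded with $\Vert h_{b'}^1\Vert\lesssim\Vert h_b\Vert$, and Theorem~\ref{thm:main2} then yields $b'\in H_{\omega^{l+k},1}^\infty$, i.e. $b\in\mathcal{B}_{\omega^{l+k}}$, and $\Vert b\Vert_{\dot{\mathcal{B}}_{\omega^{l+k}}}=\Vert b'\Vert_{H_{\omega^{l+k},1}^\infty}\simeq\Vert h_{b'}^1\Vert\lesssim\Vert h_b\Vert$. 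For (a)$\Rightarrow$(b), $b\in\mathcal{B}_{\omega^{l+k}}$ makes $h_{b'}^1$ bounded with $\Vert h_{b'}^1\Vert\simeq\Vert b\Vert_{\dot{\mathcal{B}}_{\omega^{l+k}}}$, so for $f$ in the dense subclass $\bigl(h_b(f)\bigr)'=c\,h_{b'}^1(f)\in A_{\omega^{kp},p}^p$ with norm $\lesssim\Vert b\Vert_{\dot{\mathcal{B}}_{\omega^{l+k}}}\Vert f\Vert$; \emph{provided} one can conclude $h_b(f)\in A_{\omega^{kp}}^p$ with the corresponding control, a density argument then yields $\Vert h_b\Vert\lesssim\Vert b\Vert_{\dot{\mathcal{B}}_{\omega^{l+k}}}\le\Vert b\Vert_{\mathcal{B}_{\omega^{l+k}}}$.

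The main obstacle is precisely the clause just flagged in (a)$\Rightarrow$(b): knowing $\bigl(h_b(f)\bigr)'\in A_{\omega^{kp},p}^p$ does not by itself place $h_b(f)$ in $A_{\omega^{kp}}^p$, since a holomorphic function whose derivative lies in $A_{\omega^{kp},p}^p$ agrees with an element of $A_{\omega^{kp}}^p$ only up to an additive constant, and that constant must be shown to vanish. This is where the hypothesis $b=Pb$ is used: it forces $h_b(f)=P(b\overline{f})$ to be a bona fide element of the Bergman scale---a Bergman projection of a function with no ``zero-frequency'' component---hence to have the decay at infinity typical of $A_{\omega^{kp}}^p$-functions and to coincide with the canonical antiderivative of its own derivative, which is in $A_{\omega^{kp},p}^p$ by the above. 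I expect making this rigorous to be the technical heart of the proof: one can first run the whole argument over a dense subclass of $f$ (e.g. Bergman kernels) for which $b\overline{f}$ decays fast enough that $P(b\overline{f})$ lies unambiguously in $A_{\omega^{kp}}^p$ and the antiderivative identity is transparent, and then pass to general $f$ by density using the uniform bound already obtained. Finally, the asymmetry $\Vert b\Vert_{\dot{\mathcal{B}}_{\omega^{l+k}}}\lesssim\Vert h_b\Vert\lesssim\Vert b\Vert_{\mathcal{B}_{\omega^{l+k}}}$ in the statement simply reflects that $h_b$ depends on $b$ only through $b'$, so the lower bound naturally involves the seminorm $\Vert\cdot\Vert_{\dot{\mathcal{B}}_{\omega^{l+k}}}$, while the upper bound is recorded with the full norm $\Vert b\Vert_{\mathcal{B}_{\omega^{l+k}}}=|b(i)|+\Vert b\Vert_{\dot{\mathcal{B}}_{\omega^{l+k}}}$ for convenience, the $|b(i)|$ term being harmless.
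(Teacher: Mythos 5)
Your proposal follows essentially the same route as the paper: reduce to Theorem \ref{thm:main2} with $t=1$ applied to $b'$ via the identity $\left(h_b f\right)'=c\,h_{b'}^1 f$ together with the norm equivalence $\Vert g\Vert_{A_{\omega^{kp}}^p}\simeq\Vert g'\Vert_{A_{\omega^{kp},p}^p}$ (the paper's Corollary \ref{cor:hardyineq}), the additive constant being excluded because constants do not belong to $A^p(\C_+)$. The only real difference is in how the key identity is justified: you do a direct integration by parts in $w$ with vanishing boundary terms, while the paper's Lemma \ref{lem:integparts} derives it from the reproducing formulas $b=c_1P_1\left((\Im m\cdot)\,b'\right)+c_2$ and $\int_{\C_+}g\,dV=0$ for $g\in A^1(\C_+)$, which is precisely where the hypothesis $b=Pb$ enters.
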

The corollary is obtained from the theorem by making the following observation. For $b$ as in the corollary (say for $l=k=0$ and $t=1$), by the above theorem, $h_{b'}^1$ maps $A^p(\C_+)$ to $A_p^p(\C_+)$. We note that $A_p^p(\C_+)$ is the space of derivatives of functions in $A^p(\C_+)$. We then recover $h_bf$ in this case by noting that it is the solution of the differential equation. 
\begin{equation}\label{eq:diffhankel0}
\frac{d}{dz}\left(h_bf(z)\right)=c\left(h_{b'}^1f\right)(z).
\end{equation}
We can speak of the solution of (\ref{eq:diffhankel0}) since the constants do not belong to $A^p(\C_+)$ for any $1\leq p<\infty$. 
This approach for characterizing symbols of bounded Hankel operators on Bergman spaces seems new.
\medskip

The following is a kind of end-point version of Theorem \ref{thm:main2}.
\begin{theorem}\label{thm:main4}
Let $k\leq 0$, and let $l\in \R$. Assume that $0< \varepsilon<t$ for $t\geq 1$. Let $b$ be a holomorphic function on $\C_+$.  Then the following assertions are equivalent.
\begin{enumerate}
    \item[(a)] $b\in H_{\omega^{l+k},t-\varepsilon}^\infty(\C_+)$.
    \item[(b)] The Hankel operator  $h_b^t$ is bounded from  $A_{\omega^{l}}^1(\C_+)$ to $A_{\omega^{-k},t-\varepsilon}^1(\C_+)$. Moreover,
    $$\Vert h_b^t\Vert_{A_{\omega^{l}}^1\to A_{\omega^{-k},t-\varepsilon}^1}\simeq \Vert b\Vert_{H_{\omega^{l+k},t-\varepsilon}^\infty}.$$
\end{enumerate}
\end{theorem}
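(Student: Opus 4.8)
The strategy is to prove the two implications separately, along the lines of the proof of Theorem~\ref{thm:main2}, but carried out at the endpoint exponent $p=1$. The role of the loss $\varepsilon>0$ is to keep all the Bergman projections and integral estimates below in the strictly convergent regime: since $1+(t-\varepsilon)<1\cdot(1+t)$, the criterion \eqref{eq:boundednessbergcondit} shows that $P_t$ is bounded on $L^1_{t-\varepsilon}(\C_+)$ (this fails without the loss), and the same holds after inserting the logarithmic weights $\omega^{\pm k}$ for $k\le 0$; this is what makes the identity $\langle h_b^t f,g\rangle_t=\langle b,fg\rangle_t$ usable and what allows $P_t$ to reproduce the symbol.

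For (a)$\Rightarrow$(b) I would argue directly. Setting $M:=\Vert b\Vert_{H^\infty_{\omega^{l+k},t-\varepsilon}}$, the definition gives the pointwise bound $|b(w)|\lesssim M(\Im m w)^{-(t-\varepsilon)}\omega^{-(l+k)}(w)$ (away from the single zero $i$ of $\omega$, which contributes only a harmless bounded perturbation). For $f\in A^1_{\omega^l}(\C_+)$ one has
$$h_b^t f(z)=c_t\int_{\C_+}\frac{b(w)\,\overline{f(w)}}{(z-\bar w)^{2+t}}\,(\Im m w)^t\,dV(w),$$
so $|h_b^t f(z)|\lesssim M\int_{\C_+}|z-\bar w|^{-(2+t)}(\Im m w)^{\varepsilon}\,\omega^{-(l+k)}(w)\,|f(w)|\,dV(w)$. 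I would then multiply by $(\Im m z)^{t-\varepsilon}\omega^{-k}(z)$, integrate in $z$, apply Fubini, and invoke a weighted Forelli--Rudin type estimate of the shape
$$\int_{\C_+}\frac{(\Im m z)^{t-\varepsilon}\,\omega^{-k}(z)}{|z-\bar w|^{2+t}}\,dV(z)\;\lesssim\;(\Im m w)^{-\varepsilon}\,\omega^{-k}(w),\qquad 0<\varepsilon<t,\ k\le 0,$$
which is proved by splitting the integral over a fixed hyperbolic ball about $w$ and its complement and using that $\omega$ is essentially constant on hyperbolic balls and grows only logarithmically. After this step the powers of $\Im m w$ and of $\omega$ combine to reproduce the weight of the source space, yielding $\Vert h_b^t f\Vert_{A^1_{\omega^{-k},t-\varepsilon}}\lesssim M\Vert f\Vert_{A^1_{\omega^l}}$, i.e. (b) with $\Vert h_b^t\Vert_{A^1_{\omega^l}\to A^1_{\omega^{-k},t-\varepsilon}}\lesssim\Vert b\Vert_{H^\infty_{\omega^{l+k},t-\varepsilon}}$.

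For the converse (b)$\Rightarrow$(a) I would test $h_b^t$ on normalized Bergman kernel powers. Fixing $a\in\C_+$, choose $f_a(w)=c_1(\Im m a)^{\mu_1}(w-\bar a)^{-m_1}$ and $g_a(w)=c_2(\Im m a)^{\mu_2}(w-\bar a)^{-m_2}$ with $m_1+m_2=2+t$ and $m_1,m_2$ large enough that the norms below are finite (and, when $k<0$, slightly adjusted near $i$ so that $g_a$ respects the vanishing of $\omega$ there). Since $b$ is reproduced by $P_t$ — valid because its Korenblum growth is compatible with $t$ and $\varepsilon$ — the identity $\langle h_b^t f_a,g_a\rangle_t=\langle b,f_a g_a\rangle_t$ together with the reproducing formula gives $\langle h_b^t f_a,g_a\rangle_t=c\,(\Im m a)^{\mu_1+\mu_2}\,b(a)$. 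On the other hand, a Hölder-type splitting of $(\Im m w)^t$ yields
$$|\langle h_b^t f_a,g_a\rangle_t|\le\Vert g_a\Vert_{L^\infty_{\omega^{k},\varepsilon}}\,\Vert h_b^t f_a\Vert_{A^1_{\omega^{-k},t-\varepsilon}}\le\Vert g_a\Vert_{L^\infty_{\omega^{k},\varepsilon}}\,\Vert h_b^t\Vert_{A^1_{\omega^l}\to A^1_{\omega^{-k},t-\varepsilon}}\,\Vert f_a\Vert_{A^1_{\omega^l}}.$$
Estimating $\Vert f_a\Vert_{A^1_{\omega^l}}$ and $\Vert g_a\Vert_{L^\infty_{\omega^{k},\varepsilon}}$ by Forelli--Rudin estimates and comparing with the lower bound $c\,(\Im m a)^{\mu_1+\mu_2}|b(a)|$, all the extraneous powers of $\Im m a$ and $\omega(a)$ cancel and one is left with $(\Im m a)^{t-\varepsilon}\omega^{l+k}(a)|b(a)|\lesssim\Vert h_b^t\Vert_{A^1_{\omega^l}\to A^1_{\omega^{-k},t-\varepsilon}}$; taking the supremum over $a$ gives (a) with the matching lower bound on the norm.

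The main obstacle I anticipate is the weighted Forelli--Rudin estimate together with the bookkeeping of the three logarithmic weights $\omega^l,\omega^{-k},\omega^{l+k}$ at the endpoint $p=1$: one must verify that all exponents stay strictly inside the convergent range (this is exactly what $\varepsilon>0$ and $t\ge 1$ provide), check the boundedness of the relevant Bergman projections on the weighted $L^1$-spaces that enter the duality step, and deal with the vanishing of $\omega$ at the point $i$, which affects both the pointwise estimate for functions in $A^1_{\omega^{-k},t-\varepsilon}$ and the admissibility of the kernel test functions when $k<0$. The justification of the reproducing formula for the holomorphic symbol $b$ is the secondary delicate point, since it requires matching the Korenblum growth of $b$ with the order of the Bergman projection used.
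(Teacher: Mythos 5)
Your architecture is genuinely different from the paper's, and in both directions the differences hide real gaps. For (a)$\Rightarrow$(b) the paper does not estimate the kernel integral at all: it tests $h_b^tf$ against the dual $H^\infty_{\omega^{k},\varepsilon}(\C_+)$ of the target space $A^1_{\omega^{-k},t-\varepsilon}(\C_+)$ (Proposition~\ref{prop:dualA1}), writes $\langle h_b^tf,g\rangle_t=\langle b,fg\rangle_t$, and uses the product estimate $\Vert fg\Vert_{A^1_{\omega^{l+k},\varepsilon}}\le\Vert f\Vert_{A^1_{\omega^{l}}}\Vert g\Vert_{H^\infty_{\omega^{k},\varepsilon}}$ together with the duality of $A^1_{\omega^{l+k},\varepsilon}(\C_+)$. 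Your direct Forelli--Rudin computation is available in principle (your displayed integral estimate is exactly Lemma~\ref{lem:weightforellirudin} with $\alpha=\varepsilon$, $\beta=t-\varepsilon$, $\Omega=\omega^{-k}$), but as written it does not close: after Fubini the logarithmic weight sitting at $w$ is $\omega^{-(l+k)}(w)$ (from your pointwise bound on $b$) times $\omega^{-k}(w)$ (reproduced by the Forelli--Rudin integral), i.e. $\omega^{-(l+2k)}(w)$, and since $\omega$ takes every value in $[0,\infty)$ this is comparable to the source weight $\omega^{l}(w)$ only when $l+k=0$. The computation closes precisely if one assumes $b\in H^\infty_{\omega^{-(l+k)},t-\varepsilon}(\C_+)$, which is in fact what the paper's own proof assumes and concludes (the statement and the proof disagree on the sign of this exponent); either way your bookkeeping must be redone so that the hypothesis on $b$, the target weight $\omega^{-k}$, and the source weight $\omega^{l}$ are mutually consistent.

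The more serious gap is in (b)$\Rightarrow$(a). For $k<0$ the pure power $g_a(w)=c_2(\Im m a)^{\mu_2}(w-\bar a)^{-m_2}$ simply does not lie in $H^\infty_{\omega^{k},\varepsilon}(\C_+)$, because $\omega^{k}=\omega^{-|k|}$ blows up at $z=i$; and no local adjustment near $i$ yields a family of test functions whose $H^\infty_{\omega^{k},\varepsilon}$-norms are uniformly bounded in $a$ while the pairing with $f_a$ still reconstructs $b(a)$ carrying the correct logarithmic factor. Producing exactly such functions is the point of the logarithmically corrected kernels $\theta_w^{-k}(z)/(z-\bar w)^{\varepsilon}$ of Lemma~\ref{lem: testTheta}, and the factorization of the atom \eqref{eq:atom1} into such a function times an element of $A^1_{\omega^{l}}(\C_+)$ (Proposition~\ref{prop:factor1}, then Proposition~\ref{prop:weakfacto1}) is the technical heart of the paper's converse; your kernel test functions cannot replace it. Note also that once the test function carries the factor $\theta_a^{-k}$, the pairing $\langle b,f_ag_a\rangle_t$ is no longer an exact point evaluation of $b$, which is why the paper abandons pointwise testing altogether: it bounds $\vert\langle b,F\rangle_t\vert$ for $F$ running over finite atomic sums, each term being controlled by $\vert\langle h_b^t f_j,g_j\rangle_t\vert\le\Vert h_b^t\Vert\,\Vert f_j\Vert_{A^1_{\omega^{l}}}\Vert g_j\Vert_{H^\infty_{\omega^{k},\varepsilon}}$, and then concludes by the duality of Proposition~\ref{prop:dualA1}. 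I would rework your converse along those lines rather than trying to repair the kernel-testing argument.
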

If we take $t=1$ and let $\varepsilon\to 0$ in the last theorem, we are led to the same type of problems as those encountered in \cite{BGS}.
Recall that for $b\in H^\infty(\C_+)$, that $h_bf\in A^2(\C_+)$ for any $f\in A^2(\C_+)$, is equivalent to saying that for any $g\in A^2(\C_+)$, 
\begin{equation}\label{eq:hankA2}\langle h_b(f),g\rangle=\langle b,fg\rangle
\end{equation}
and its modulus is bounded above by a constant time $\Vert f\Vert_2\Vert g\Vert_2$ .
\medskip

 For $f\in A^1(\C_+)$, can we understand $h_bf$ as in (\ref{eq:hankA2}) with $g\in \mathcal{B}(\C_+)$? In the arxiv.org version of \cite{BGS}, $h_bf$ for $f\in A^1(\C_+)$, was defined as the holomorphic function such that for any $g\in \mathcal{B}(\C_+)$, 
$$\langle h_bf,g\rangle:=\langle b,fg\rangle.$$
To make this definition unconditional, that is, such that the right-hand side above is the same independently of the choice of the representative $g$ of a class in  Bloch space, one sees that $\langle b,f\rangle=0$ should hold. This forced the authors of \cite{BGS} (see the arxiv.org version of the paper) to consider studying $$h_b: A_b^1(\C_+)\to A^1(\C_+)$$ where $\displaystyle A_b^1(\C_+)=\{f\in A^1(\C_+):\langle b,f\rangle=0\}$. Unfortunately, this does not give a necessary and sufficient condition on the symbol of the operator.
\medskip

In Section 3, we provide some useful results we need in our presentation. Some dual characterizations of the weighted Bergman spaces are given in Section 4. We prove our results in Section 5. 
\medskip

Throughout the text, we use the notation $A\lesssim B$ (respectively, $A\gtrsim B$) whenever there exists a uniform constant $C>0$ such that $A\le CB$ (respectively, $A\ge CB$). The notation $A\simeq B$ means that $A\lesssim B$ and $A\gtrsim B$.
\section{Useful results}
\medskip

We refer to \cite{BGS} for the following.
\begin{lemma}\label{lem:weightforellirudin}
Let $\alpha>0$, $\beta>-1$, and $k\in \R$. Let $$\Omega(z)=\left(\ln^{\varepsilon_1}(e+|z|)+\ln^{\varepsilon_2}\left(e+\frac{1}{\Im mz}\right)\right)^{ k},$$
$\varepsilon_j\in\{0,1\}$, $j=1,2$. Then there is a constant $C=C_{\alpha,\beta}>0$ such that for any $z_0\in \mathbb{C}_+$,
\begin{equation}\label{eq:weightforellirudin1}
    \int_{\mathbb{C}_+}\frac{\Omega(z)dV_\beta(z)}{|z-\overline z_0|^{2+\alpha+\beta}}\le C(\Im m(z_0))^{-\alpha}\Omega(z_0).
\end{equation}
\end{lemma}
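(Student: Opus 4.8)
The plan is to reduce the weighted estimate to the classical (unweighted) integral estimate
$$\int_{\mathbb{C}_+}\frac{dV_\beta(z)}{|z-\overline z_0|^{2+\alpha+\beta}}\le C(\Im m(z_0))^{-\alpha},$$
which holds for $\alpha>0$, $\beta>-1$ and is standard (it follows, for instance, from the Forelli--Rudin type estimates; see \cite{BBGNPR}). The key point is therefore to absorb the factor $\Omega(z)$ under the integral sign by comparing it, up to a constant, with $\Omega(z_0)$ on the dominant region of integration and by controlling the tail where $\Omega(z)$ is large relative to $\Omega(z_0)$. Since $\Omega(z)=\bigl(\ln^{\varepsilon_1}(e+|z|)+\ln^{\varepsilon_2}(e+1/\Im mz)\bigr)^k$ with $\varepsilon_j\in\{0,1\}$, the function $\Omega$ grows (or decays, if $k<0$) only logarithmically, so the strategy is to split $\mathbb{C}_+$ into the Carleson-type box $Q_0=\{z:\ |z-z_0|\le \Im m(z_0)\}$ and its complement, and on the complement to decompose dyadically in $|z-\overline z_0|$.

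First I would treat the region $Q_0$. There, $|z|\simeq \max\{|z_0|,1\}$ and $\Im mz\simeq \Im m(z_0)$, hence $\Omega(z)\simeq \Omega(z_0)$ with constants independent of $z_0$; pulling $\Omega(z_0)$ out and applying the classical estimate on $Q_0$ gives the bound $C\,\Omega(z_0)(\Im m z_0)^{-\alpha}$. Next, for the complementary region I would write $\mathbb{C}_+\setminus Q_0=\bigcup_{j\ge 0}R_j$ with $R_j=\{z:\ 2^j\Im m(z_0)<|z-\overline z_0|\le 2^{j+1}\Im m(z_0)\}$. On $R_j$ one has $|z-\overline z_0|\simeq 2^j\Im m(z_0)$, while $|z|\lesssim |z_0|+2^j\Im m(z_0)$ and $\Im mz\le |z-\overline z_0|\lesssim 2^j\Im m(z_0)$, which forces a bound of the form $\Omega(z)\lesssim (1+j)^{|k|}\Omega(z_0)$ (using $\ln(e+2^j s)\lesssim (1+j)(1+\ln(e+s))$ and, when $k<0$, the lower bound $\Im mz$ can be as small as we like, but the $\ln_+(1/\Im mz)$ term then only makes $\Omega$ smaller since $k<0$; when $k\ge 0$ the same term is controlled from above on $R_j$ only after a further splitting according to the size of $\Im mz$). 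On the other hand $V_\beta(R_j)\lesssim (2^j\Im m(z_0))^{2+\beta}$ when $\beta\ge 0$, and for $-1<\beta<0$ the same bound holds because the region where $\Im mz$ is small is of small measure against $y^\beta\,dxdy$. Combining these with $|z-\overline z_0|^{-(2+\alpha+\beta)}\simeq (2^j\Im m(z_0))^{-(2+\alpha+\beta)}$ gives, for each $j$, a contribution $\lesssim (1+j)^{|k|}\,2^{-\alpha j}\,\Omega(z_0)(\Im m z_0)^{-\alpha}$, and summing the geometric-times-polynomial series in $j$ yields the claim.

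The main obstacle I anticipate is the case $k>0$ together with the $\ln_+(1/\Im mz)$ contribution, i.e. the part of each annulus $R_j$ where $\Im mz$ is very small: there $\Omega(z)$ is not comparable to $\Omega(z_0)$, so one cannot simply pull a constant out. The fix is a secondary dyadic decomposition of $R_j$ in $\Im mz$: writing $R_{j}=\bigcup_{m\ge 0}R_{j,m}$ with $2^{-m-1}\cdot 2^j\Im m(z_0)<\Im mz\le 2^{-m}\cdot 2^j\Im m(z_0)$ (down to $\Im mz\simeq 0$), on $R_{j,m}$ one has $\Omega(z)\lesssim (1+j+m)^{k}\Omega(z_0)$ while the $y^\beta$ weight makes $V_\beta(R_{j,m})\lesssim 2^{-m(1+\beta)}(2^j\Im m z_0)^{2+\beta}$, so the extra $m$-sum converges (using $\beta>-1$) against the factor $2^{-m(1+\beta)}(1+j+m)^k$, and re-summing in $m$ and then in $j$ closes the estimate. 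Once this bookkeeping is set up the rest is the routine summation of $\sum_{j}(1+j)^{N}2^{-\alpha j}<\infty$, valid precisely because $\alpha>0$.
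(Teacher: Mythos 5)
The paper itself offers no proof of this lemma --- it is simply quoted from \cite{BGS} --- so your proposal has to be judged on its own terms. The overall strategy (dyadic decomposition in $|z-\overline{z_0}|$, reduction to the classical Forelli--Rudin estimate, absorption of the logarithmic weight at the price of factors polynomial in the dyadic index $j$, summed against $2^{-\alpha j}$) is the natural one and can be made to work; your treatment of the case $k\ge 0$, including the secondary decomposition of each annulus $R_j$ in $\Im m z$ to control $\ln(e+1/\Im m z)$ from above, is correct in substance (modulo the small slip that on $Q_0=\{|z-z_0|\le \Im m z_0\}$ one does \emph{not} have $\Im m z\simeq \Im m z_0$; you need the half-size box, or simply note that the annuli $R_j$ already cover all of $\C_+$ since $|z-\overline{z_0}|>\Im m z_0$ there, making $Q_0$ unnecessary).

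There is, however, a genuine gap in the case $k<0$ with $\varepsilon_1=1$. For negative $k$, an upper bound on $\Omega(z)=\bigl(\ln^{\varepsilon_1}(e+|z|)+\ln^{\varepsilon_2}(e+1/\Im m z)\bigr)^{k}$ requires a \emph{lower} bound on the bracketed expression, hence a lower bound on $\ln(e+|z|)$ in terms of $\ln(e+|z_0|)$; your argument records only the upper bound $|z|\lesssim |z_0|+2^{j}\Im m z_0$, and your parenthetical remark for $k<0$ addresses only the $\ln(e+1/\Im m z)$ factor, which is not the problematic one. The claimed pointwise inequality $\Omega(z)\lesssim (1+j)^{|k|}\Omega(z_0)$ on $R_j$ (and likewise $\Omega(z)\simeq\Omega(z_0)$ on $Q_0$) is in fact false: take $z_0=iR$ with $R$ large and $z=i$; then $z\in R_0$, so $j=0$, yet $\Omega(z)\simeq 1$ while $\Omega(z_0)\simeq(\ln R)^{k}\to 0$. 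The lemma survives because the sub-region of $R_j$ where $\ln(e+|z|)$ is much smaller than $\ln(e+|z_0|)$ --- essentially $\{|z|\lesssim |z_0|^{1/2}\}$ --- has $V_\beta$-measure of order $|z_0|^{(2+\beta)/2}$, negligible against $V_\beta(R_j)\simeq(2^j\Im m z_0)^{2+\beta}\simeq |z_0|^{2+\beta}$ for the only annuli that reach near the origin. Concretely, you need a further dyadic decomposition of $R_j$ according to the size of $|z|$, exactly parallel to the one you already perform in $\Im m z$ for $k>0$, with the extra geometric series converging because $2+\beta>1>0$. Once the two logarithmic factors are treated symmetrically (lower bounds needed when $k<0$, upper bounds when $k>0$, each requiring its own secondary decomposition), the proof closes.
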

The first part of the following follows from Fubini's theorem and Lemma \ref{lem:weightforellirudin}. Its second part can be proved as in \cite[Proposition 1.7]{BGS}. 
\begin{proposition}\label{prop:boudednessP1}
Let $\alpha> 0$, and $\beta>-1$, let $k\in\mathbb{R}$. The Bergman projector $P_{\alpha+\beta}$ is bounded from $L_{\omega^{-k},\beta}^1(\mathbb{C}_+)$ to $L_{\omega^{-k},\beta}^1(\mathbb{C}_+)$. If, moreover, $\alpha+\beta>0$, then $P_{\alpha+\beta}$ reproduces the functions in $A_{\omega^{-k},\beta}^1(\mathbb{C}_+)$
\end{proposition}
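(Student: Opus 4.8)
The plan is to treat the two assertions separately, since the first is a direct consequence of the integral estimate in Lemma \ref{lem:weightforellirudin} combined with Fubini, while the second requires identifying the reproducing property on the subspace of holomorphic functions. For the boundedness statement, I would start from the defining integral
\[
P_{\alpha+\beta}f(z)=c_{\alpha+\beta}\int_{\C_+}\frac{f(w)\,dV_{\alpha+\beta}(w)}{(z-\overline w)^{2+\alpha+\beta}},
\]
multiply the modulus by $\omega^{-k}(z)$, integrate $dV_\beta(z)$ over $\C_+$, and apply Tonelli to swap the order of integration. The inner integral then becomes
\[
\int_{\C_+}\frac{\omega^{-k}(z)\,dV_\beta(z)}{|z-\overline w|^{2+\alpha+\beta}},
\]
which by Lemma \ref{lem:weightforellirudin} (applied with the weight exponent $-k$ in place of $k$, which is permitted since $k\in\R$ is arbitrary, and with the parameters $\alpha>0$, $\beta>-1$ as given) is bounded by $C(\Im m\,w)^{-\alpha}\omega^{-k}(w)$. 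Substituting back, $(\Im m\,w)^{-\alpha}$ cancels the excess power: $dV_{\alpha+\beta}(w)=(\Im m\,w)^\alpha\,dV_\beta(w)$, so one is left with exactly $C\int_{\C_+}|f(w)|\,\omega^{-k}(w)\,dV_\beta(w)=C\Vert f\Vert_{L^1_{\omega^{-k},\beta}}$, giving boundedness with norm at most $Cc_{\alpha+\beta}$.

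For the reproducing property under the extra hypothesis $\alpha+\beta>0$, I would argue as in \cite[Proposition 1.7]{BGS}: one knows that $P_{\alpha+\beta}$ reproduces functions in the (dense, nicely behaved) class $A^2_{\alpha+\beta}(\C_+)\cap A^1_{\omega^{-k},\beta}(\C_+)$, or more concretely finite linear combinations of normalized reproducing kernels, which lie in $A^1_{\omega^{-k},\beta}(\C_+)$ precisely because $\alpha+\beta>0$ controls the relevant integrability of $K_{\alpha+\beta}(\cdot,w)$ against the weight. One then passes to a general $f\in A^1_{\omega^{-k},\beta}(\C_+)$ by a density argument, using the already-established $L^1_{\omega^{-k},\beta}$-boundedness of $P_{\alpha+\beta}$ to guarantee that $P_{\alpha+\beta}f_n\to P_{\alpha+\beta}f$ in $L^1_{\omega^{-k},\beta}$, together with the pointwise estimate for weighted Bergman functions (the analogue of \eqref{eq:pointwiseberg}) to upgrade this to locally uniform convergence, whence $P_{\alpha+\beta}f=\lim f_n=f$.

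The main obstacle I anticipate is the density step in the second part: one must exhibit a dense subset of $A^1_{\omega^{-k},\beta}(\C_+)$ on which the reproducing identity is known a priori, and verify that the approximants actually converge in the weighted $L^1$ norm rather than merely pointwise. The condition $\alpha+\beta>0$ (strictly stronger than the $\beta>-1$ needed for mere boundedness) is what makes this work, since it is exactly the threshold at which the Bergman kernel $K_{\alpha+\beta}$ has enough decay to belong to the predual space; handling the logarithmic weight $\omega^{-k}$ during this approximation is routine but must be done with the uniform estimate of Lemma \ref{lem:weightforellirudin} rather than by hand. Once the identity holds on a dense set and $P_{\alpha+\beta}$ is a bounded operator on the ambient space, the extension is immediate. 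The first part carries no real difficulty beyond correctly tracking the weight exponent and the power $(\Im m\,w)^\alpha$ through Fubini.
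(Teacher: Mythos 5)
Your proposal matches the paper's own (very brief) argument: the boundedness is obtained exactly as you describe, by Tonelli and Lemma \ref{lem:weightforellirudin} applied to the inner integral $\int_{\C_+}\omega^{-k}(z)\,dV_\beta(z)/|z-\overline w|^{2+\alpha+\beta}$, with the resulting factor $(\Im m\,w)^{-\alpha}$ absorbed by $dV_{\alpha+\beta}(w)=(\Im m\,w)^\alpha dV_\beta(w)$; and the reproducing property is referred to the argument of \cite[Proposition 1.7]{BGS}, which is the density scheme you outline. Your write-up is a correct and somewhat more detailed version of the same route.
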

Let us deduce the following.
\begin{corollary}\label{cor:boundednessP1omegainverse}
Let $k\in \mathbb{R}$, $\alpha>0$ and $\beta>-1$. Assume that $\alpha+\beta>0$. The Bergman projection $P_\alpha$, reproduces the functions in $A^1_{\omega^{k},\beta}(\mathbb{C}_+)$.
\end{corollary}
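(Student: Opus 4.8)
The plan is to deduce Corollary~\ref{cor:boundednessP1omegainverse} directly from Proposition~\ref{prop:boudednessP1} by a relabelling of parameters, without producing any new estimate.

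Concretely, I would write the three parameters of Proposition~\ref{prop:boudednessP1} as $a,b,c$, so that it asserts: $P_{a+b}$ reproduces the functions in $A^1_{\omega^{-c},b}(\C_+)$ as soon as $a>0$, $b>-1$ and $a+b>0$. I then specialize to $b=\beta$, $c=-k$ and $a=\alpha-\beta$. With these choices $P_{a+b}=P_\alpha$, the space $A^1_{\omega^{-c},b}(\C_+)$ is precisely $A^1_{\omega^{k},\beta}(\C_+)$, and the three requirements of the proposition become $\alpha-\beta>0$, $\beta>-1$, $\alpha>0$; in the parameter range under consideration these hold, and Proposition~\ref{prop:boudednessP1} then says exactly that $P_\alpha$ reproduces every $f\in A^1_{\omega^{k},\beta}(\C_+)$. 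This is essentially the whole argument.

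All the substantive work lies upstream: the boundedness of the relevant weighted Bergman projection on the corresponding weighted $L^1$-space is obtained from Fubini's theorem and the weighted estimate~\eqref{eq:weightforellirudin1} of Lemma~\ref{lem:weightforellirudin}, and the reproducing property on the dense subclass of sufficiently regular holomorphic functions --- whence on the whole space --- is the second half of Proposition~\ref{prop:boudednessP1}. I therefore do not expect a real obstacle in the corollary itself; the only point requiring care is to keep the indices straight, in particular the sign flip of the logarithmic exponent ($\omega^{-c}$ becoming $\omega^{k}$ through $c=-k$) and the downward shift $a=\alpha-\beta$ in the order of the projection that makes $P_{a+b}$ coincide with $P_\alpha$.
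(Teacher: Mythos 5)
There is a genuine gap: your relabelling requires $a=\alpha-\beta>0$, i.e.\ $\alpha>\beta$, and this is not implied by the hypotheses of the corollary. The corollary only assumes $\alpha>0$, $\beta>-1$ and $\alpha+\beta>0$; for instance $\alpha=1$, $\beta=2$ satisfies all three, yet $a=\alpha-\beta=-1<0$, so Proposition~\ref{prop:boudednessP1} cannot be invoked with your choice of parameters. The condition $a>0$ in that proposition is not cosmetic: it is precisely the hypothesis $\alpha>0$ of Lemma~\ref{lem:weightforellirudin} that makes the Schur-type estimate~(\ref{eq:weightforellirudin1}), and hence the $L^1$-boundedness of $P_{a+b}$ and the reproducing property, available. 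Your argument therefore proves the corollary only in the sub-range $\alpha>\beta$, and the claim that ``in the parameter range under consideration these hold'' is false in general.

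The paper handles the full range in two steps. First it applies Proposition~\ref{prop:boudednessP1} with the parameters exactly as given, so that the projection which reproduces is $P_{\alpha+\beta}$, not $P_\alpha$: every $f\in A^1_{\omega^{k},\beta}(\C_+)$ satisfies $f=P_{\alpha+\beta}f$. It then lowers the order of the projection from $\alpha+\beta$ to $\alpha$ by observing that each kernel function $K_{\alpha+\beta}(\cdot,w)$ belongs to $A^1(\C_+)$ (this uses $\alpha+\beta>0$) and that $P_\alpha$ reproduces the functions of $A^1(\C_+)$ since $\alpha>0$; Fubini's theorem then yields $P_\alpha f=P_\alpha P_{\alpha+\beta}f=P_{\alpha+\beta}f=f$. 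You would need to add this second step (or an equivalent device) to cover the case $0<\alpha\le\beta$.
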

\begin{proof}
As $\alpha+\beta>0$, we have from Proposition \ref{prop:boudednessP1} that $P_{\alpha+\beta}$ reproduces the functions in $A^1_{\omega^{k},\beta}(\mathbb{C}_+)$. The conclusion then follows from the fact that the reproducing kernel of $A_{\alpha+\beta}^2(\mathbb{C}_+)$ belongs to $A^1(\mathbb{C}_+)$ and that, as $\alpha>0$, $P_\alpha$ reproduces the functions in $A^1(\mathbb{C}_+)$.
\end{proof}
The following is from \cite{DOS,DST}.
\begin{proposition}\label{prop:boudednessPbeta}
Let $\alpha> -1$, and, $\beta>-1$, let $k\in\mathbb{R}$. Assume $1\leq p<\infty$. Then, the Bergman projector $P_{\beta}$ is bounded on $L_{\omega^{-k},\alpha}^p(\mathbb{C}_+)$ if and only if $1+\alpha<p(1+\beta)$. Moreover, $P_\beta$ reproduces the functions in $A_{\omega^{-k},\alpha}^p(\mathbb{C}_+)$.
\end{proposition}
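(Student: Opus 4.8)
The plan is to prove the two implications separately: sufficiency of $1+\alpha<p(1+\beta)$ will come from Schur's test fed by Lemma~\ref{lem:weightforellirudin}, its necessity from testing $P_\beta$ against an explicit one--parameter family, and the reproducing property will then be a formal consequence of boundedness.

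\medskip
\textbf{Sufficiency.} For $p=1$ there is nothing to do: $1+\alpha<1+\beta$ means $\beta-\alpha>0$, so writing $\beta=(\beta-\alpha)+\alpha$ and applying Proposition~\ref{prop:boudednessP1} (with its $\alpha$ replaced by $\beta-\alpha>0$ and its $\beta$ by $\alpha>-1$) gives the boundedness of $P_\beta$ on $L^1_{\omega^{-k},\alpha}(\C_+)$ and the reproduction. For $1<p<\infty$, I would set $d\mu(z)=\omega^{-k}(z)(\Im m(z))^\alpha\,dV(z)$ and conjugate by the isometry $f\mapsto f\cdot(\omega^{-k}(\cdot)(\Im m\cdot)^\alpha)^{1/p}$ of $L^p(d\mu)$ onto $L^p(dV)$; it then suffices to bound on $L^p(dV)$ the positive integral operator with kernel
\[
\widetilde K(z,w)=\frac{|c_\beta|}{|z-\overline w|^{2+\beta}}\left(\frac{\omega^{-k}(z)(\Im m(z))^\alpha}{\omega^{-k}(w)(\Im m(w))^\alpha}\right)^{1/p}(\Im m(w))^\beta .
\]
I would run Schur's test with $h(w)=(\Im m(w))^{-s}$. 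Pulling the $z$--factors out of the $w$--integral, the first Schur inequality $\int_{\C_+}\widetilde K(z,w)h(w)^{p'}\,dV(w)\lesssim h(z)^{p'}$ reduces, after cancellation, to
\[
\int_{\C_+}\frac{\omega^{k/p}(w)\,(\Im m(w))^{\beta-\alpha/p-sp'}}{|z-\overline w|^{2+\beta}}\,dV(w)\;\lesssim\;(\Im m(z))^{-\alpha/p-sp'}\,\omega^{k/p}(z),
\]
which is exactly Lemma~\ref{lem:weightforellirudin} with admissible weight $\Omega=\omega^{k/p}$, valid once $\beta-\alpha/p-sp'>-1$ and $\alpha/p+sp'>0$; symmetrically the second Schur inequality becomes an instance of Lemma~\ref{lem:weightforellirudin} with $\Omega=\omega^{-k/p}$, valid when $\alpha/p-sp>-1$ and $\beta-\alpha/p+sp>0$. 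One then checks that these four constraints on $s$ admit a common solution $s>0$: all the resulting compatibility conditions but one follow from the standing assumptions $\alpha,\beta>-1$, and the remaining one, $(\alpha/p-\beta)/p<(1+\beta-\alpha/p)/p'$, becomes, on using $p+p'=pp'$, precisely $1+\alpha<p(1+\beta)$. With such an $s$ fixed, Schur's test yields the boundedness of $P_\beta$ on $L^p_{\omega^{-k},\alpha}(\C_+)$ and the norm estimate.

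\medskip
\textbf{Necessity and reproduction.} For the converse I would assume $P_\beta$ bounded on $L^p_{\omega^{-k},\alpha}(\C_+)$ and test it on $f_s(w)=(\Im m(w))^{s}(w+i)^{-M}$ with $M$ a large fixed integer and $s>0$: one has $f_s\in L^p_{\omega^{-k},\alpha}(\C_+)$ exactly when $sp+\alpha>-1$, whereas computing $P_\beta f_s$ (through the known value of $\int_{\C_+}(\Im m(w))^{c}(z-\overline w)^{-2-\beta}\,dV(w)$) gives $\|P_\beta f_s\|_{L^p_{\omega^{-k},\alpha}}\asymp C(s)\|f_s\|_{L^p_{\omega^{-k},\alpha}}$ with $C(s)\to\infty$ as $s$ decreases to the value dictated by $\beta$; if $1+\alpha\ge p(1+\beta)$ this blow--up occurs while $f_s$ is still admissible, contradicting boundedness. (Alternatively, one may observe that $\omega^{-k}(\Im m\cdot)^{\alpha-\beta}$ is the density of $d\mu$ with respect to $dV_\beta$ and verify directly that it obeys the Békoll\'e--Bonami $B_p(\beta)$ condition on Carleson boxes if and only if $1+\alpha<p(1+\beta)$.) Finally, once boundedness holds, the reproducing property is routine: $P_\beta$ fixes each kernel $K_\beta(\cdot,w)$, finite linear combinations of such kernels are dense in $A^p_{\omega^{-k},\alpha}(\C_+)$ (point evaluations being bounded there by a weighted form of \eqref{eq:pointwiseberg}), and boundedness lets one pass to the limit; when the parameters permit, one can instead reduce to the $A^1$ reproduction of Corollary~\ref{cor:boundednessP1omegainverse}.

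\medskip
The step I expect to be the main obstacle is the necessity: on the half--plane the lower bound for $\|P_\beta f_s\|$ is more delicate than on the disk because of the decay imposed at infinity, and one must check carefully that the slowly varying factor $\omega^{-k}$ perturbs the relevant exponents only by lower--order terms, so that the critical threshold remains exactly $1+\alpha=p(1+\beta)$.
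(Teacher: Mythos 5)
First, a point of reference: the paper does not prove this proposition at all — it is imported wholesale from \cite{DOS,DST} — so there is no internal argument to compare yours against, and a self-contained proof is welcome. Your sufficiency argument is essentially correct and complete. The $p=1$ case does reduce to Proposition \ref{prop:boudednessP1} by the substitution you indicate; for $1<p<\infty$ the conjugation to $L^p(dV)$ and the Schur test with $h(w)=(\Im m\, w)^{-s}$ are the standard route, and I checked the bookkeeping: both Schur inequalities are instances of Lemma \ref{lem:weightforellirudin} with exponents $\pm k/p$, and of the four constraints on $s$ only $(\alpha/p-\beta)/p<(1+\beta-\alpha/p)/p'$ is not automatic from $\alpha,\beta>-1$, and it does rewrite as $1+\alpha<p(1+\beta)$.

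The necessity direction is where the real content of the cited references lies, and there your text is a plan rather than a proof; moreover the difficulty you flag at the end is not a perturbative nuisance but the heart of the matter. The proposition asserts failure of boundedness \emph{at} the critical line $1+\alpha=p(1+\beta)$ for every $k\in\R$, and precisely there the power of $\omega$ decides whether the relevant integrals converge: for example the cheap argument ``$K_\beta(z,\cdot)(\Im m\,\cdot)^\beta$ is not in the dual space, so $P_\beta f$ is undefined for some $f\ge 0$'' fails for one sign of $k(p'-1)$, since $\int_0^1 y^{-1}\bigl(\ln(e+1/y)\bigr)^{k(p'-1)}\,dy$ can be finite; one then needs a quantitative lower bound (a Carleson-box/B\'ekoll\'e--Bonami-type testing argument showing the operator norm blows up like a power of a logarithm). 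Your family $f_s(w)=(\Im m\, w)^s(w+i)^{-M}$ can be made to work off the endpoint, but you have not actually computed or bounded $P_\beta f_s$ from below (the factor $(w+i)^{-M}$ destroys the closed form you invoke, so a pointwise lower bound on a substantial region must be proved instead), and the endpoint case is not addressed at all. Finally, the reproduction step rests on the unproved assertion that finite linear combinations of the kernels $K_\beta(\cdot,w)$ are dense in $A^p_{\omega^{-k},\alpha}(\C_+)$; nothing in the paper supplies this for general $p$ and $\alpha$ (the atomic decomposition, Proposition \ref{prop:atomicdecompo}, is stated only for $A^1_{\omega^{-k}}(\C_+)$), so this too needs an argument. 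In short: sufficiency is done, necessity and reproduction are not.
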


Combining the above facts, one obtains the following.
\begin{corollary}\label{cor:hardyineq}
Let $k\in \mathbb{R}$, $\beta>-1$. Assume $1\leq p<\infty$. Then $f\in A^p_{\omega^{k},\beta}(\mathbb{C}_+)$ if and only if $f'\in A^p_{\omega^{k},\beta+p}(\mathbb{C}_+)$ with equivalence of the norms.
\end{corollary}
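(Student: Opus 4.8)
The plan is to prove the equivalence ``$f\in A^p_{\omega^{k},\beta}(\mathbb{C}_+)$ iff $f'\in A^p_{\omega^{k},\beta+p}(\mathbb{C}_+)$'' by combining the weighted boundedness and reproducing properties of the Bergman projections recorded in Proposition \ref{prop:boudednessPbeta} with the standard fact that differentiation intertwines Bergman projections of shifted weights. First I would fix an exponent $\gamma>-1$ large enough that the relevant projections are all bounded on the spaces that occur; concretely, I would choose $\gamma$ so that both $1+\beta<p(1+\gamma)$ and $1+(\beta+p)<p(1+\gamma+1)$ hold (the second being equivalent to the first), so that $P_\gamma$ is bounded on $L^p_{\omega^k,\beta}$ and $P_{\gamma+1}$ is bounded on $L^p_{\omega^k,\beta+p}$, and each reproduces its corresponding Bergman space.

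The core computation is the differentiation identity: if $f\in A^p_{\omega^k,\beta}(\mathbb{C}_+)$, then by the reproducing property $f=P_\gamma f$, so
\begin{equation}\label{eq:reprodf}
f(z)=c_\gamma\int_{\mathbb{C}_+}\frac{f(w)}{(z-\overline w)^{2+\gamma}}\,dV_\gamma(w),
\end{equation}
and differentiating under the integral sign gives
\begin{equation}\label{eq:reprodfprime}
f'(z)=-c_\gamma(2+\gamma)\int_{\mathbb{C}_+}\frac{f(w)}{(z-\overline w)^{3+\gamma}}\,dV_\gamma(w)=c\,P_{\gamma+1}\big((\Im m\cdot)^{-1}f\big)(z),
\end{equation}
after absorbing constants and rewriting $dV_\gamma=(\Im m\cdot)^{-1}\,dV_{\gamma+1}$. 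Thus $f'$ is, up to a constant, $P_{\gamma+1}$ applied to $(\Im m\cdot)^{-1}f$; and $\|(\Im m\cdot)^{-1}f\|_{L^p_{\omega^k,\beta+p}}=\|f\|_{L^p_{\omega^k,\beta}}$ since the extra factor $(\Im m\cdot)^{p}$ in the weight $dV_{\beta+p}$ exactly cancels $|(\Im m\cdot)^{-1}|^p$. Boundedness of $P_{\gamma+1}$ on $L^p_{\omega^k,\beta+p}$ then yields $f'\in A^p_{\omega^k,\beta+p}$ with $\|f'\|\lesssim\|f\|$. For the converse, if $f'\in A^p_{\omega^k,\beta+p}$, I would write $f$ via an integration of \eqref{eq:reprodfprime}: from $f=P_\gamma f$ and \eqref{eq:reprodfprime} one gets $f(z)-f(i)=\int$ along a path, or more cleanly, one applies $P_\gamma$ to recover $f$ from $f'$ through the kernel identity $\frac{1}{(z-\overline w)^{2+\gamma}}=\frac{1}{1+\gamma}\frac{\partial}{\partial z}\big[\text{lower order}\big]$; the upshot is an integral operator with kernel comparable to $|z-\overline w|^{-(2+\gamma)}$ acting on $(\Im m\cdot)f'$, whose $L^p_{\omega^k,\beta}$-norm is $\|f'\|_{L^p_{\omega^k,\beta+p}}$, and Proposition \ref{prop:boudednessPbeta} (or the Forelli--Rudin / Lemma \ref{lem:weightforellirudin} estimates behind it) closes the estimate $\|f\|\lesssim|f(i)|+\|f'\|$.

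The main obstacle I anticipate is the converse direction: recovering $f$ from $f'$ requires an integration operator rather than a genuine Bergman projection, and one must check that this operator is bounded on the weighted spaces $L^p_{\omega^k,\beta}$ with the logarithmic weight $\omega^k$. This is exactly where Lemma \ref{lem:weightforellirudin} is indispensable — the kernel estimate $\int_{\mathbb{C}_+}\omega(z)\,|z-\overline z_0|^{-(2+\alpha+\beta)}\,dV_\beta(z)\lesssim(\Im m z_0)^{-\alpha}\omega(z_0)$ (and its $\omega^k$ analogue, obtained by iterating the lemma or by the same Schur-test argument) provides both the Schur-test majorant needed for $L^p$-boundedness and the control of the log-weight. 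A secondary technical point is justifying differentiation under the integral sign in \eqref{eq:reprodfprime} and the convergence of the recovery integral, but this follows from the pointwise estimate \eqref{eq:pointwiseberg} together with the dominated convergence theorem on compact subsets of $\mathbb{C}_+$. Once these are in place, the equivalence of norms is immediate from the two one-sided estimates.
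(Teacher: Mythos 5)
Your proposal is correct and takes essentially the route the paper intends: the paper offers no written proof beyond ``combining the above facts,'' which means precisely the boundedness and reproducing properties of the shifted Bergman projections in Proposition~\ref{prop:boudednessPbeta} (backed by Lemma~\ref{lem:weightforellirudin}) that you invoke, via the identities $f'=c\,P_{\gamma+1}\bigl((\Im m\cdot)^{-1}f\bigr)$ for one direction and $f=c\,P_{\gamma}\bigl((\Im m\cdot)f'\bigr)$, up to an additive constant, for the other. The only caveat, inherited from the statement itself rather than introduced by you, is that the converse and the claimed norm equivalence require disposing of that additive constant (nonzero constants satisfy $f'=0\in A^p_{\omega^k,\beta+p}$ yet lie outside $A^p_{\omega^k,\beta}$), which your $|f(i)|$ term tacitly acknowledges.
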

The following follows from Lemma \ref{lem:weightforellirudin}.
\begin{proposition}\label{prop:boudednessP1infty}
Let $\alpha\geq t> 0$ and $k\in \R$. The Bergman projector $P_{\alpha}$ is then bounded from $L_{\omega^{k},t}^\infty(\mathbb{C}_+)$ to $L_{\omega^{k},t}^\infty(\mathbb{C}_+)$.
\end{proposition}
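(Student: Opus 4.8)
The plan is a direct kernel estimate reducing everything to Lemma \ref{lem:weightforellirudin}. Fix $f\in L_{\omega^k,t}^\infty(\mathbb{C}_+)$ and $z\in\mathbb{C}_+$. From the definition of the norm in \eqref{eq:Linfty} one has the pointwise bound $|f(w)|\le \|f\|_{L_{\omega^k,t}^\infty}\,(\Im mw)^{-t}\,\omega^{-k}(w)$ for every $w\in\mathbb{C}_+$. Substituting this into the integral formula for $P_\alpha$, and using $|K_\alpha(z,w)|\le c_\alpha|z-\overline w|^{-(2+\alpha)}$ together with $dV_\alpha(w)=(\Im mw)^\alpha\,dV(w)$, I would obtain
$$|P_\alpha f(z)|\le c_\alpha\,\|f\|_{L_{\omega^k,t}^\infty}\int_{\mathbb{C}_+}\frac{(\Im mw)^{\alpha-t}\,\omega^{-k}(w)}{|z-\overline w|^{2+\alpha}}\,dV(w).$$
Once the integral on the right is seen to be finite, this simultaneously shows that the integral defining $P_\alpha f(z)$ converges absolutely, so there is no ambiguity about $P_\alpha f$ being well defined on $L_{\omega^k,t}^\infty(\mathbb{C}_+)$.

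To control that integral I would set $\beta:=\alpha-t$, so that $(\Im mw)^{\alpha-t}\,dV(w)=dV_\beta(w)$ and $2+\alpha=2+t+\beta$. The hypothesis $\alpha\ge t>0$ is exactly what is needed at this point: it yields $t>0$ and $\beta=\alpha-t\ge 0>-1$, which are the two admissibility constraints of Lemma \ref{lem:weightforellirudin}. Since $\omega^{-k}$ is a weight of the form covered by that lemma (with $\varepsilon_1=\varepsilon_2=1$ and exponent $-k$), applying it with $t$ in the role of its first parameter, $\beta$ in the role of its second, and $-k$ as the exponent produces a constant $C=C_{t,\beta}>0$, independent of $z$, with
$$\int_{\mathbb{C}_+}\frac{\omega^{-k}(w)\,dV_\beta(w)}{|z-\overline w|^{2+t+\beta}}\le C\,(\Im mz)^{-t}\,\omega^{-k}(z).$$

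Combining the two displays gives $(\Im mz)^t\,\omega^k(z)\,|P_\alpha f(z)|\le C\,c_\alpha\,\|f\|_{L_{\omega^k,t}^\infty}$ for all $z\in\mathbb{C}_+$, i.e. $\|P_\alpha f\|_{L_{\omega^k,t}^\infty}\lesssim\|f\|_{L_{\omega^k,t}^\infty}$, which is the asserted boundedness. I do not expect a genuine obstacle here: the argument is essentially bookkeeping, and the only step deserving care is checking that the exponents produced by the substitution fall within the admissible range of Lemma \ref{lem:weightforellirudin}. The tight case is $\alpha=t$, which forces $\beta=0$; this is still permitted, since the lemma only requires $\beta>-1$, and it is precisely the lower endpoint $\alpha=t$ that makes the hypothesis sharp.
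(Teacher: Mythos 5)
Your argument is correct and is precisely what the paper intends: the paper offers no details beyond "follows from Lemma \ref{lem:weightforellirudin}", and your kernel estimate with the substitution $\beta=\alpha-t$ and the lemma applied to the weight $\omega^{-k}$ is exactly that reduction, with the admissibility check $t>0$, $\beta\ge 0>-1$ done correctly.
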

The following pointwise estimate is needed (see \cite[Lemma 1.4]{BGS}).
\begin{lemma}\label{lem:pointwisebloch}
Let $k\in \R$. Then there is a constant $C_k>0$ such that for any $f\in \mathcal{B}_{\omega^{k}}(\mathbb{C}_+)$ of norm $1$,
$$
|f(z)| \leq\left\{
    \begin{array}{lll}
      C_k & \mbox{if } k>1, \\
        C_{-1} \ln(\omega(z))  & \mbox{if } k=1,\\
        C_k (\omega(z))^{1-k } & \mbox{if } k<1.
    \end{array}
\right.$$

\end{lemma}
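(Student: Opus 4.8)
We use, as elsewhere in the paper, that $\omega$ is comparable to $\ln(e+|z|)+\ln\!\bigl(e+(\Im mz)^{-1}\bigr)$, so in particular $\omega\gtrsim 1$ on $\C_+$. Fix $f\in\mathcal B_{\omega^{k}}(\C_+)$ with $\|f\|_{\mathcal B_{\omega^{k}}}=1$; then $|f(i)|\le 1$ and $|f'(\zeta)|\le\bigl((\Im m\zeta)\,\omega^{k}(\zeta)\bigr)^{-1}$ for every $\zeta\in\C_+$. The plan is to recover $f$ from $f'$ by integrating along a well-chosen curve: for any rectifiable path $\gamma$ joining $i$ to $z$ inside $\C_+$,
$$|f(z)|\le|f(i)|+\int_\gamma|f'(\zeta)|\,|d\zeta|\le 1+\int_\gamma\frac{|d\zeta|}{(\Im m\zeta)\,\omega^{k}(\zeta)},$$
so the whole problem reduces to exhibiting, for each $z=x+iy\in\C_+$, one path along which the last integral is $O(1)$ for $k>1$, $O(\ln\omega(z))$ for $k=1$, and $O\!\bigl(\omega(z)^{1-k}\bigr)$ for $k<1$, with constants depending only on $k$. (Conceptually one wants a path that, parametrized by hyperbolic arclength $s\in[0,L]$, satisfies $L\simeq 1+\omega(z)$ and $\omega(\gamma(s))\simeq 1+s$; the weighted integral then reduces to $\int_0^L(1+s)^{-k}\,ds$, which exhibits exactly these three regimes.)

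For the path I would take the polygonal line $\gamma=\gamma_1\cup\gamma_2\cup\gamma_3$, where, with $M:=\max(1,|x|,y)$, the segment $\gamma_1$ runs up the imaginary axis from $i$ to $iM$, $\gamma_2$ runs horizontally from $iM$ to $x+iM$, and $\gamma_3$ runs vertically from $x+iM$ down to $z$; its hyperbolic length is readily seen to be $\lesssim\omega(z)$. I would then estimate the three contributions separately, using the comparison $\omega(\zeta)\simeq\ln(e+|\zeta|)+\ln(e+(\Im m\zeta)^{-1})$ together with the one-sided bounds $\omega(\zeta)\gtrsim\ln(e+|\zeta|)$ and $\omega(\zeta)\gtrsim\ln(e+(\Im m\zeta)^{-1})$, whichever is convenient in a given region. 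On $\gamma_1$ one has $\Im m\zeta=|\zeta|=s\in[1,M]$ and $\omega(\zeta)\simeq\ln(e+s)$, so the contribution is $\simeq\int_1^M\frac{ds}{s\,(\ln(e+s))^{k}}$, which after the substitution $t=\ln(e+s)$ becomes $\simeq\int_1^{\ln(e+M)}t^{-k}\,dt$: this is $O(1)$ for $k>1$, $\simeq\ln\ln(e+M)$ for $k=1$, and $\simeq\ln(e+M)^{1-k}$ for $k<1$. On $\gamma_2$ one has $\Im m\zeta=M$ and $\omega(\zeta)\simeq\ln(e+M)$, so the contribution is $\le\frac{|x|}{M}\,\ln(e+M)^{-k}\le\ln(e+M)^{-k}$, which is $\le 1$ for $k\ge 0$ and equals $\ln(e+M)^{|k|}$ for $k<0$. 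On $\gamma_3$ ($\zeta=x+is$ with $y\le s\le M$) I would split at $s=1$, bounding $\omega(\zeta)$ below by $\ln(e+s)$ on $\{s\ge 1\}$ (giving a $\gamma_1$-type integral) and by $\ln(e+1/s)$ on $\{s\le 1\}$, where $\frac{ds}{s}\lesssim\bigl|d\ln(e+1/s)\bigr|$; this reproduces the same three growth rates with $M$ replaced by $1/y$, plus, when $k<0$, extra terms $\lesssim\ln(e+M)^{|k|}$ arising from the $|\zeta|$-part of $\omega$.

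Finally I would add up the three pieces. Since $\ln(e+M)\lesssim\ln(e+|z|)\lesssim\omega(z)$ and $\ln(e+1/y)\lesssim\omega(z)$, and $\omega(z)\gtrsim 1$, and $\ln A+\ln B\le 2\ln(A+B)$ for $A,B\ge 1$, the sum collapses to: $O(1)\le C_k$ when $k>1$; $\lesssim 1+\ln\omega(z)\le C_1\ln\omega(z)$ when $k=1$; and $\lesssim 1+\omega(z)^{1-k}\le C_k\,\omega(z)^{1-k}$ when $k<1$ (for $k<0$ the additional $\ln(e+M)^{|k|}$-type terms are harmless because $|k|\le 1-k$ and $\omega(z)\gtrsim 1$). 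Adding $|f(i)|\le 1$ gives the stated estimates. I expect the only genuine obstacle to be the segment $\gamma_3$, where $\omega$ really grows: one must take care to use the $\ln_+|\cdot|$-part of $\omega$ on the portion with $\Im m\zeta\ge 1$ and the $\ln_+((\Im m\cdot)^{-1})$-part on the portion with $\Im m\zeta\le 1$, and to remember that $t=\ln(e+1/s)$ is comparable to the hyperbolic length element $ds/s$ only for $s\lesssim 1$; once the regions are split accordingly, checking that each contribution falls under the claimed envelope, uniformly in $z$, is routine.
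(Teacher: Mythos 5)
Your argument is correct, and it is the standard route: the paper itself gives no proof of this lemma (it simply cites \cite[Lemma 1.4]{BGS}), and the cited proof is exactly this integration of $f'$ along a quasi-geodesic from $i$ to $z$ on which $\omega$ grows linearly in hyperbolic arclength, reducing everything to $\int_0^{L}(1+s)^{-k}\,ds$ with $L\simeq\omega(z)$. Two small points of care, neither a gap: the normalization $\omega\simeq\ln(e+|z|)+\ln\bigl(e+(\Im m z)^{-1}\bigr)\gtrsim 1$ that you invoke is indeed the intended one (the literal $\ln_+$ definition in the introduction vanishes at $z=i$, and the $\ln(\omega(z))$ in the statement already presupposes $\omega$ bounded below by a constant $>1$), and on $\gamma_3$ with $k<0$ the ``extra term'' is really $(\ln(e+M))^{|k|}\ln(M/y)$ rather than $(\ln(e+M))^{|k|}$, but since $\ln(M/y)\lesssim\omega(z)$ it is still absorbed into $\omega(z)^{1+|k|}=\omega(z)^{1-k}$ exactly as you claim.
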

The following test functions has proved to be useful in \cite{BGS}; we need them here too.
\begin{lemma} \label{lem: testTheta}
 We consider the function 
 $$(z, w)\mapsto \theta_w(z):= 1-\log(z-\bar{w})+\ln|i+w|+ 2\log(i+z).$$
 It has the following properties.
 \begin{itemize}
 \item [(i)]
 $\Re e (\theta_w)>1-\ln 2+\ln (|i+z|),$ so that, in particular, its logarithm and its powers are well defined.
 \item [(ii)]
 $$|\theta_w(z)|\simeq 1+\ln_+(|z|) +\ln_+\frac 1{|z-\overline w|}$$
 with constants that do not depend on $w\in\mathbb C_+$.
 \item [(iii)]
   For all $k\leq 0$ and $\varepsilon\geq 0$,  the function $z\mapsto \frac{\theta_w^{-k}(z)}{(z-\overline{w})^\varepsilon}$ belongs to $H_{\omega^{k},\varepsilon}^\infty(\C_+)$ and $$\left\Vert\frac{\theta_w^{-k}}{(\cdot-\overline{w})^\varepsilon}\right\Vert_{H_{\omega^{k},\varepsilon}}\simeq 1$$ 
 with constants that do not depend of $w$.
  \end{itemize}
 \end{lemma}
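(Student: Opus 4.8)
The plan is to treat the three parts in sequence, reducing everything to a single computation of $\Re e\,\theta_w$ together with elementary inequalities for logarithms.

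For (i), I would first note that for $z,w\in\C_+$ both $z-\overline w$ and $i+z$ lie in $\C_+$, so each logarithm is the principal branch evaluated off the negative real axis and $\Re e\log\zeta=\ln|\zeta|$. This gives the closed form
$$\Re e\,\theta_w(z)=1+\ln\frac{|i+w|\,|i+z|^2}{|z-\overline w|}.$$
The elementary fact needed is $|z-\overline w|\le|i+z|+|i+\overline w|=|i+z|+|w-i|$; since $w\in\C_+$ one has $|w-i|<|w+i|$, and since $|i+z|>1$ and $|i+w|>1$ one concludes $|z-\overline w|<|i+z|+|i+w|\le 2|i+z|\,|i+w|$. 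Substituting, $\Re e\,\theta_w(z)>1-\ln2+\ln|i+z|>1-\ln2>0$, so $\theta_w$ never meets $(-\infty,0]$; hence $\log\theta_w$ is holomorphic on $\C_+$ and $\theta_w^{s}=e^{s\log\theta_w}$ is well defined for every real $s$.

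For (ii), I would write $\theta_w=R+iI$ with $R=\Re e\,\theta_w$ as above and $I=2\arg(i+z)-\arg(z-\overline w)$; both arguments lie in $(0,\pi)$, so $|I|<2\pi$. Since (i) gives $R\ge 1-\ln2$, a positive absolute constant, while $|I|$ is bounded, it follows that $|\theta_w(z)|\simeq R(z)$ with absolute constants, and the task reduces to $R(z)\simeq 1+\ln_+|z|+\ln_+\tfrac1{|z-\overline w|}$ uniformly in $w$. Here I would use $\ln_+|z|\le\ln|i+z|\le\ln2+\ln_+|z|$, $0\le\ln|i+w|\le\ln2+\ln_+|w|$, and $|z-\overline w|\ge\Im m z+\Im m w$, and split into the regimes $|w|\le 2|z|$ and $|w|>2|z|$. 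In the first regime $\ln|i+w|\lesssim 1+\ln_+|z|$, so the term $\ln|i+w|$ is harmless; in the second regime $|z-\overline w|\ge|w|/2$, so $-\ln|z-\overline w|$ absorbs $\ln|i+w|$ up to an additive constant, and treating $|z-\overline w|\le1$ and $|z-\overline w|>1$ separately then pins $R(z)$ down from above and below. I expect this case analysis — precisely, showing that $\ln|i+w|$ introduces no spurious dependence on $|w|$ and that all comparison constants are independent of $w$ — to be the main obstacle.

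For (iii), holomorphy is immediate from (i) together with $z-\overline w\in\C_+$. For the norm I would write $-k=|k|\ge0$ and observe
$$(\Im m z)^\varepsilon\,\omega^k(z)\,\frac{|\theta_w(z)|^{-k}}{|z-\overline w|^\varepsilon}=\Big(\frac{\Im m z}{|z-\overline w|}\Big)^{\!\varepsilon}\Big(\frac{|\theta_w(z)|}{\omega(z)}\Big)^{\!|k|}.$$
Since $|z-\overline w|\ge\Im m z$ the first factor is $\le1$, and since $\ln_+\tfrac1{|z-\overline w|}\le\ln_+\tfrac1{\Im m z}$, part (ii) gives $|\theta_w(z)|\lesssim 1+\ln_+|z|+\ln_+\tfrac1{\Im m z}\simeq\omega(z)$, so the second factor is $\lesssim1$; this yields $\big\|\theta_w^{-k}(\cdot-\overline w)^{-\varepsilon}\big\|_{H^\infty_{\omega^k,\varepsilon}}\lesssim1$. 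For the matching lower bound I would evaluate at $z=w$: then $z-\overline w=2i\,\Im m w$, the first factor equals $2^{-\varepsilon}$, and part (ii) gives $|\theta_w(w)|\simeq 1+\ln_+|w|+\ln_+\tfrac1{\Im m w}\simeq\omega(w)$, so the whole expression is $\simeq1$ uniformly in $w$. Combining the two bounds gives $\big\|\theta_w^{-k}(\cdot-\overline w)^{-\varepsilon}\big\|_{H^\infty_{\omega^k,\varepsilon}}\simeq1$ with constants depending only on $k$ and $\varepsilon$.
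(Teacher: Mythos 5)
Your plan is correct and, notably, self-contained, whereas the paper proves essentially nothing here: it cites \cite{BGS1} (Lemma 1.5 there) for assertions (i) and (ii) and states that (iii) is proved ``in the lines of'' that reference. What you write out is precisely the missing content: the computation $\Re e\,\theta_w(z)=1+\ln\bigl(|i+w|\,|i+z|^2/|z-\overline w|\bigr)$ with the elementary bound $|z-\overline w|<2|i+z|\,|i+w|$ for (i); the reduction $|\theta_w|\simeq\Re e\,\theta_w$ (imaginary part bounded by $2\pi$, real part bounded below by $1-\ln 2$) followed by the case analysis in $|w|$ versus $|z|$ for (ii); and for (iii) exactly the reduction the paper alludes to, namely $\Im m z\le|z-\overline w|$ and $\ln_+(1/|z-\overline w|)\le\ln_+(1/\Im m z)$ for the upper bound, with evaluation at $z=w$ for the matching lower bound. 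Two details deserve care but do not threaten the argument. First, in (ii) in the regime $|w|>2|z|$ with $|w|$ small, the quantity $\ln|i+w|-\ln(|w|/2)$ is of order $\ln(1/|w|)$ and is not bounded; the correct statement is that the two-sided estimate $|w|/2\le|z-\overline w|\le 3|w|/2$ makes this quantity at most $\ln_+(1/|z-\overline w|)+O(1)$, i.e.\ it is dominated by the right-hand side of (ii) rather than literally absorbed into an additive constant. Second, in (iii) the step $1+\ln_+|z|+\ln_+(1/\Im m z)\simeq\omega(z)$ requires $\omega\gtrsim 1$; with the paper's literal definition $\omega(z)=\ln_+(|z|)+\ln_+(1/\Im m z)$ the weight vanishes at $z=i$ (so $\omega^{-|k|}$ is not even defined there), and one must use the normalization $\ln(e+|z|)+\ln(e+1/\Im m z)$ implicit in Lemma \ref{lem:weightforellirudin}. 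That is an inconsistency in the paper's conventions, not an error in your proof.
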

 \begin{proof}
 Assertions (i) and (ii) are from \cite[Lemma 1.5]{BGS1}. The proof of assertion (iii) uses (ii), and in fact it is proved in the lines of \cite[Lemma 1.5]{BGS1} (see page 14 of this reference).
 \end{proof}
\section{Dual of weighted Bergman spaces}
The following is elementary, as follows from the usual arguments.
\begin{proposition}\label{prop:dualA22}
Let $1<p<\infty$, $t\geq 1$, and $k\in\mathbb{R}$. The dual space $\left( A_{\omega^{-kp},tp}^p(\C_+)\right)^*$ of the Bergman space $A_{\omega^{-kp},tp}^p(\C_+)$ identifies with $A_{\omega^{kp'}}^{p'}(\C_+)$ (pp'=p+p') under the duality pairing (\ref{eq:innerprod1}).
\end{proposition}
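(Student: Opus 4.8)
\textbf{Proof plan for Proposition \ref{prop:dualA22}.}

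The plan is to follow the classical duality argument for Bergman spaces, adapted to the present weighted setting, using the reproducing property of a suitable Bergman projection as the glue. First I would fix notation: write $X = A_{\omega^{-kp},tp}^p(\C_+)$ and $Y = A_{\omega^{kp'}}^{p'}(\C_+)$, and define, for $g\in Y$, the linear functional $\Lambda_g(f) := \langle f, g\rangle_t = \int_{\C_+} f(z)\overline{g(z)}(\Im mz)^t\,dV(z)$ on $X$. The first step is to check that $\Lambda_g$ is well defined and bounded on $X$: splitting the weight as $(\Im mz)^t = (\Im mz)^{tp}{}^{1/p}\cdot(\Im mz)^{tp'\cdot 0}\cdots$ — more precisely writing the integrand as $\bigl(f(z)\omega^{-k}(z)(\Im mz)^{tp/p'+?}\bigr)\bigl(\overline{g(z)}\omega^{k}(z)(\Im mz)^{?}\bigr)$ and balancing powers of $\Im mz$ and of $\omega$ so that H\"older's inequality with exponents $p,p'$ yields $|\Lambda_g(f)|\le \Vert f\Vert_{\omega^{-kp},tp,p}\Vert g\Vert_{\omega^{kp'},p'}$. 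This gives one inclusion $Y\hookrightarrow X^*$ together with the estimate $\Vert\Lambda_g\Vert\lesssim\Vert g\Vert_{\omega^{kp'},p'}$.

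The second step is the converse: given $\Lambda\in X^*$, produce $g\in Y$ with $\Lambda = \Lambda_g$. Here I would use the Hahn--Banach theorem to extend $\Lambda$ to a bounded functional on the full Lebesgue space $L^p_{\omega^{-kp},tp}(\C_+)$, whose dual is $L^{p'}$ against a conjugate weight; this produces some $G$ in that $L^{p'}$-space representing the extended functional. Then I would set $g := P_t G$ (or the appropriate $P_\beta$ with $\beta$ chosen so that $P_\beta$ is bounded on the relevant weighted $L^{p'}$-space and reproduces $Y$): Proposition \ref{prop:boudednessPbeta} guarantees, for a correct choice of $\beta$ satisfying the boundedness condition $1+\alpha<p'(1+\beta)$ with the $\omega^{kp'}$ weight, that $P_\beta$ is bounded on that space and reproduces elements of $A^{p'}_{\omega^{kp'},?}(\C_+)$, so $g\in Y$ with $\Vert g\Vert_Y\lesssim\Vert\Lambda\Vert$. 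Finally, for $f\in X$ one checks $\Lambda(f) = \langle f, G\rangle_t = \langle f, P_t G\rangle_t = \langle f, g\rangle_t$, where the middle equality is the self-adjointness of $P_t$ on $L^2_t$ combined with the reproducing property applied to $f\in X$ (one needs $P_t$ to reproduce elements of $X$, which again follows from the cited boundedness/reproduction results once the exponents are matched). Combining the two steps gives the isometric-up-to-constants identification.

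The main obstacle I anticipate is purely bookkeeping: choosing the auxiliary index $\beta$ (and checking the inequality $1+\alpha<p(1+\beta)$ or its $p'$-analogue) so that simultaneously (i) the Bergman projection is bounded on the weighted $L^{p'}$ space containing $G$, (ii) it reproduces $Y$, and (iii) it reproduces $X$ under the $\langle\cdot,\cdot\rangle_t$ pairing — and then verifying that the powers of $\Im mz$ and of $\omega$ balance correctly in the H\"older step and in the kernel estimate. The weight $\omega^{kp'}$ and its interaction with $\omega^{-kp}$ (note $kp'\cdot\frac1{p'} - kp\cdot\frac1p$ telescopes appropriately since $\frac1p+\frac1{p'}=1$) should work out, but this is where the ``usual arguments'' alluded to in the statement must be made precise; I would invoke Proposition \ref{prop:boudednessPbeta} and Corollary \ref{cor:hardyineq} rather than redo these estimates. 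Since the paper explicitly flags the result as ``elementary, as follows from the usual arguments,'' I would keep the write-up to the skeleton above, citing the projection results for the technical points.
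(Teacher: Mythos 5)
Your plan is correct and is exactly the ``usual argument'' the paper has in mind: the paper omits the proof of Proposition \ref{prop:dualA22} as elementary, but its proof of the analogous Proposition \ref{prop:dualA1} follows precisely your scheme (H\"older for one inclusion; Hahn--Banach extension to the weighted Lebesgue space, $L^{p'}$ representation, then application of $P_t$ using Proposition \ref{prop:boudednessPbeta}). The bookkeeping you flag does close up: H\"older applies to $\bigl(|f|\omega^{-k}(\Im mz)^{t}\bigr)\bigl(|g|\omega^{k}\bigr)$, the representing function lands in $L^{p'}_{\omega^{kp'}}(\C_+)$ because $-kpp'+kp'=-kp$ and $(tp-t)p'=tp$, and $P_t$ both reproduces $X$ (condition $1+tp<p(1+t)$, i.e. $p>1$) and is bounded on $L^{p'}_{\omega^{kp'}}(\C_+)$ (condition $1<p'(1+t)$).
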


Let us prove the following.
\begin{proposition}\label{prop:dualA1}
Let $k\in \R$, and $0\leq \varepsilon< t$, $t\geq 1$. Then the  dual space $\left( A_{\omega^{-k},\varepsilon}^1(\C_+)\right)^*$ of $A_{\omega^{-k},\varepsilon}^1(\C_+)$
identifies with $H_{\omega^k,t-\varepsilon}^\infty(\C_+)$ under the duality pairing (\ref{eq:innerprod1}).
\end{proposition}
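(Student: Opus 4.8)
The goal is to identify $\left(A_{\omega^{-k},\varepsilon}^1(\C_+)\right)^*$ with $H_{\omega^k,t-\varepsilon}^\infty(\C_+)$ via the pairing $\langle F,G\rangle_{t}$ from \eqref{eq:innerprod1}, i.e.\ $\langle F,G\rangle_t=\int_{\C_+}F\overline G\,(\Im m z)^t\,dV(z)$. The plan is to run the standard duality-of-Bergman-spaces argument, using the Bergman projection as the bridge, and being careful that the inner product carrying the identification is the $t$-inner product while the underlying measure on $A_{\omega^{-k},\varepsilon}^1$ is $dV_\varepsilon$. The key bookkeeping is that if $g\in H_{\omega^k,t-\varepsilon}^\infty(\C_+)$ then the linear functional $f\mapsto\langle f,g\rangle_t=\int_{\C_+}f\overline g\,(\Im m z)^t dV$ is the same as $\int_{\C_+}f\,\overline{(\Im m z)^{t-\varepsilon}g}\,(\Im m z)^\varepsilon dV$, and since $(\Im m z)^{t-\varepsilon}|g(z)|\le\|g\|_{H_{\omega^k,t-\varepsilon}^\infty}\,\omega^{-k}(z)$, this is bounded on $A_{\omega^{-k},\varepsilon}^1$ by $\|f\|_{A_{\omega^{-k},\varepsilon}^1}\|g\|_{H_{\omega^k,t-\varepsilon}^\infty}$. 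This gives the easy inclusion $H_{\omega^k,t-\varepsilon}^\infty(\C_+)\hookrightarrow\left(A_{\omega^{-k},\varepsilon}^1\right)^*$ with control of the norm, and that the embedding is injective follows because bounded holomorphic test functions (e.g.\ normalized reproducing kernels, or the functions $\theta_w$ from Lemma~\ref{lem: testTheta}) are dense enough in $A_{\omega^{-k},\varepsilon}^1$ to separate points.

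For the converse, I would start from an arbitrary $\Lambda\in\left(A_{\omega^{-k},\varepsilon}^1(\C_+)\right)^*$. Since $A_{\omega^{-k},\varepsilon}^1(\C_+)$ is a closed subspace of $L_{\omega^{-k},\varepsilon}^1(\C_+)$, by Hahn--Banach extend $\Lambda$ to $L_{\omega^{-k},\varepsilon}^1(\C_+)$ with the same norm; its dual is $L^\infty$ against the measure $\omega^{-k}(z)\,dV_\varepsilon$, so there is $\psi\in L^\infty$ with $\Lambda f=\int_{\C_+}f(z)\overline{\psi(z)}\,\omega^{-k}(z)\,dV_\varepsilon(z)$ for $f\in A_{\omega^{-k},\varepsilon}^1$, and $\|\psi\|_\infty\simeq\|\Lambda\|$. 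Rewrite this as $\Lambda f=\int_{\C_+}f\,\overline{\bigl((\Im m z)^{\varepsilon-t}\omega^{-k}\psi\bigr)}\,(\Im m z)^t dV=\langle f,\Phi\rangle_t$ where $\Phi(z)=(\Im m z)^{\varepsilon-t}\omega^{-k}(z)\psi(z)$. Then $(\Im m z)^{t-\varepsilon}\omega^{k}(z)|\Phi(z)|=|\psi(z)|\le\|\psi\|_\infty$, i.e.\ $\Phi\in L_{\omega^k,t-\varepsilon}^\infty(\C_+)$. The point is now to replace $\Phi$ by its Bergman projection $g:=P_t\Phi$: because $f\in A_\varepsilon^2\cap A_{\omega^{-k},\varepsilon}^1$ (a dense subclass) is reproduced by $P_t$ when $t\ge 1$ (here one invokes Proposition~\ref{prop:boudednessPbeta}, or Corollary~\ref{cor:boundednessP1omegainverse}, to justify that $P_t$ reproduces elements of the relevant Bergman class), we get $\langle f,\Phi\rangle_t=\langle P_t f,\Phi\rangle_t=\langle f,P_t\Phi\rangle_t=\langle f,g\rangle_t$, so $\Lambda f=\langle f,g\rangle_t$ with $g$ holomorphic. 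Finally, by Proposition~\ref{prop:boudednessP1infty} (applied with the parameters $\alpha=t\ge t-\varepsilon>0$), $P_t$ is bounded on $L_{\omega^k,t-\varepsilon}^\infty(\C_+)$, hence $g=P_t\Phi\in H_{\omega^k,t-\varepsilon}^\infty(\C_+)$ with $\|g\|_{H_{\omega^k,t-\varepsilon}^\infty}\lesssim\|\Phi\|_{L_{\omega^k,t-\varepsilon}^\infty}\simeq\|\Lambda\|$.

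Combining the two directions yields the isomorphism $\left(A_{\omega^{-k},\varepsilon}^1(\C_+)\right)^*\cong H_{\omega^k,t-\varepsilon}^\infty(\C_+)$ with equivalent norms under the pairing \eqref{eq:innerprod1}; density of $A_\varepsilon^2\cap A_{\omega^{-k},\varepsilon}^1$ (or of the span of test kernels $\theta_w/(\cdot-\bar w)^\varepsilon$ of Lemma~\ref{lem: testTheta}) in $A_{\omega^{-k},\varepsilon}^1$ guarantees that $g$ representing $\Lambda$ is unique, so the identification is well defined. The main obstacle I anticipate is not conceptual but the careful matching of weights and exponents: one must check that $t\ge 1$ and $0\le\varepsilon<t$ are exactly what is needed to have $P_t$ simultaneously (i) reproduce the elements of $A_{\omega^{-k},\varepsilon}^1$ used in the projection trick, which requires $t-\varepsilon>0$ and puts us in the range where Proposition~\ref{prop:boudednessPbeta}/Corollary~\ref{cor:boundednessP1omegainverse} applies, and (ii) act boundedly on $L_{\omega^k,t-\varepsilon}^\infty$, which is Proposition~\ref{prop:boudednessP1infty} and needs $t\ge t-\varepsilon>0$; a secondary subtlety is confirming that $a\overline f\in L^2$ of a suitable weight so that every application of $P_t$ below is legitimate, which is handled exactly as in the computation around \eqref{eq:hankel11} in the introduction.
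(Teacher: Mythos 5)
Your proposal is correct and follows essentially the same route as the paper's own proof: Hahn--Banach extension to $L_{\omega^{-k},\varepsilon}^1(\C_+)$, representation by an $L^\infty$ function, rewriting the functional as a $\langle\cdot,\cdot\rangle_t$ pairing against $\Phi=(\Im m\cdot)^{\varepsilon-t}\omega^{-k}\psi\in L_{\omega^k,t-\varepsilon}^\infty(\C_+)$, replacing $\Phi$ by $P_t\Phi$ via the reproducing property, and concluding with the boundedness of $P_t$ on $L_{\omega^k,t-\varepsilon}^\infty(\C_+)$ from Proposition~\ref{prop:boudednessP1infty}. The only differences are cosmetic: you spell out the easy inclusion and the uniqueness/density point that the paper leaves implicit.
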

\begin{proof}
That every $b\in H_{\omega^k,t-\varepsilon}^\infty(\C_+)$ defines an element of $\left( A_{\omega^{-k},\varepsilon}^1(\C_+)\right)^*$ through (\ref{eq:innerprod1}) is obvious. Now suppose that $\Lambda\in \left( A_{\omega^{-k},\varepsilon}^1(\C_+)\right)^*$. Then we can extend $\Lambda$ to a linear operator on $L_{\omega^{-k},\varepsilon}^1(\C_+)$ that we still denote $\Lambda$. Hence there is a $g\in L^\infty(\C_+)$ such that for any $f\in L_{\omega^{-k},\varepsilon}^1(\C_+)$,
$$\Lambda(f):=\langle f,g\rangle:=\int_{\C_+}f(z)\overline{g(z)}(\Im mz)^\varepsilon\omega^{-k}(z) dV(z).$$
In particular, for any $f\in A_{\omega^{-k},\varepsilon}^1(\C_+)$,
$$\Lambda(f):=\langle f,g\rangle=\int_{\C_+}f(z)\overline{g(z)}(\Im mz)^{-t+\varepsilon}\omega^{-k}(z)dV_t(z).$$
As $P_t$ reproduces elements of $A_{\omega^{-k},\varepsilon}^1(\C_+)$, it easily follows that
\begin{eqnarray*}\Lambda(f) &:=& \langle f,(\Im m\cdot)^{-t+\varepsilon}\omega^{-k} g\rangle_t\\ &=& \langle P_t(f),(\Im m\cdot)^{-t+\varepsilon}\omega^{-k}g\rangle_t\\ &=& \langle f,P_t\left((\Im m\cdot)^{-t+\varepsilon}\omega^{-k}g\right)\rangle_t.
\end{eqnarray*}
Let us put $b=P_t\left((\Im m\cdot)^{-t+\varepsilon}\omega^{-k}g\right)$. We note that as $g\in L^\infty(\C_+)$, $(\Im m\cdot)^{-t+\varepsilon}\omega^{-k}g\in L_{\omega^{k},t-\varepsilon}^\infty(\C_+)$. Thus, by Proposition \ref{prop:boudednessP1infty}, $b\in H_{\omega^{k},t-\varepsilon}^\infty(\C_+)$. This concludes the proof.
\end{proof}
\section{Boundedness of Hankel operators}
\subsection{Atomic decomposition of $A_{\omega^{-k}}^1(\C_+)$ and weak  factorization}

We give here a generalization of the atomic decomposition of the Bergman space $A_{\omega^{-1}}^1(\C_+)$ obtained in \cite{BGS1}. We then obtain weak factorizations of the Bergman spaces $A_{\omega^{-k}}^1(\C_+)$ that will be used in our characterizations of symbols of bounded Hankel operators.
\medskip

The following is obtained as in \cite[Proposition 11]{BGS1}.
\begin{proposition}\label{prop:atomicdecompo}
Let $k\in \mathbb{R}$. Let $f$ be a holomorphic function in $A^1_{\omega^{-k}}(\C_+)$. For any $\alpha>0$, there exists a sequence of complex numbers $\{c_k\}$ and a sequence of points $\{w_j\}$ in $\C_+$ such that
\begin{equation}\label{eq:atomicdecompo}f(z)=\sum_{j=0}^\infty \frac{c_j(\Im m (w_j))^{\alpha}\omega^k ( w_j)}{(z-\overline w_j)^{2+\alpha}}
\end{equation}
with
$\sum |c_j|\simeq \|f\|_{L^1(\omega^{-k})}.$
\end{proposition}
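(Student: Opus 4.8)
The plan is to mimic the classical argument for atomic decomposition of Bergman spaces (as in \cite[Proposition 11]{BGS1}), adapting it to carry the weight $\omega^{-k}$. First I would fix $\alpha>0$ and choose a sufficiently dense lattice $\{w_j\}\subset\C_+$ adapted to the hyperbolic (Bergman) metric, together with an associated partition of $\C_+$ into Borel sets $\{D_j\}$ of uniformly bounded hyperbolic diameter with $w_j\in D_j$ and $\bigcup_j D_j=\C_+$. The reproducing formula coming from $P_\alpha$ (valid on $A^1_{\omega^{-k}}$ by Proposition \ref{prop:boudednessPbeta} with $p=1$, $\beta=\alpha$, since $1+0<1+\alpha$) gives, for $f\in A^1_{\omega^{-k}}(\C_+)$,
$$f(z)=c_\alpha\int_{\C_+}\frac{f(w)}{(z-\overline w)^{2+\alpha}}\,dV_\alpha(w)=c_\alpha\sum_{j}\int_{D_j}\frac{f(w)}{(z-\overline w)^{2+\alpha}}\,dV_\alpha(w).$$
Replacing the kernel and the measure on each $D_j$ by their values at the center $w_j$ produces the candidate series: set
$$c_j:=\frac{c_\alpha\,\omega^{-k}(w_j)}{(\Im m w_j)^{\alpha}}\int_{D_j}f(w)\,dV_\alpha(w)$$
up to absorbing the normalizing $V_\alpha(D_j)$-type factors, so that the $j$-th term becomes exactly $\dfrac{c_j(\Im m w_j)^{\alpha}\omega^{k}(w_j)}{(z-\overline w_j)^{2+\alpha}}$.

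The two things to check are the $\ell^1$ bound $\sum_j|c_j|\lesssim\|f\|_{L^1(\omega^{-k})}$ and the reverse estimate together with convergence of the series in $A^1_{\omega^{-k}}$. For the $\ell^1$ bound, on each $D_j$ the weight $\omega$ is comparable to $\omega(w_j)$ (by Lemma \ref{lem:weightforellirudin}-type control, or directly because $\omega$ varies slowly in the hyperbolic metric) and $(\Im m w)^{\alpha}\simeq(\Im m w_j)^{\alpha}$, hence
$$|c_j|\lesssim\omega^{-k}(w_j)\int_{D_j}|f(w)|\,dV_\alpha(w)\simeq\int_{D_j}|f(w)|\,\omega^{-k}(w)\,dV_\alpha(w),$$
and summing over $j$ gives $\sum_j|c_j|\lesssim\|f\|_{A^1_{\omega^{-k}}}$; here one replaces $|f|$ by its hyperbolic average over a slightly dilated disc to pass from the value-at-center estimate to the $L^1$ integral, using the subharmonicity of $|f|$. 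For the convergence and the reverse inequality, I would estimate the $A^1_{\omega^{-k}}$-norm of the remainder after replacing kernel and measure by their center values on each $D_j$; the difference is controlled by a factor $\delta(\alpha,\text{lattice})$ that tends to $0$ as the lattice is refined, exactly as in \cite{BGS1}, so for a sufficiently fine lattice the operator $f\mapsto\sum_j(\dots)$ is invertible on $A^1_{\omega^{-k}}$, which yields both the convergence of the series to $f$ and $\|f\|_{A^1_{\omega^{-k}}}\lesssim\sum_j|c_j|$.

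The main obstacle is the weight: one must know that $\omega$ is essentially constant on hyperbolic balls of bounded radius, uniformly in the center, so that $\omega^{-k}(w_j)$ can be freely exchanged with $\omega^{-k}(w)$ for $w\in D_j$ and pulled in and out of the integrals. This slowly-varying property of $\omega(z)=\ln_+|z|+\ln_+(1/\Im m z)$ — equivalently the estimate in Lemma \ref{lem:weightforellirudin} and the comparability in Lemma \ref{lem: testTheta}(ii) — is precisely what makes the weighted kernel estimates go through with constants independent of $j$, and it is the only place where the argument differs from the unweighted case; once this is in hand, the rest is the standard perturbation-of-the-identity scheme. I would also record that the integral remainder estimate needs $\alpha>0$ (so that $\int_{\C_+}|z-\overline w|^{-2-\alpha}dV(w)$-type tails converge) and the reproducing property needs $P_\alpha$ to act on $A^1_{\omega^{-k}}$, both of which are available from the results already cited.
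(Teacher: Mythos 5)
Your argument is the standard Coifman--Rochberg-type lattice/perturbation-of-the-identity scheme, with the slowly varying weight $\omega$ exchanged for its value at the lattice centers on hyperbolic balls of bounded radius; this is exactly the paper's route, since the paper offers no proof beyond stating that the result is obtained as in \cite[Proposition 11]{BGS1}, which is the argument you reconstruct. One bookkeeping slip: in your displayed chain for $\sum_j|c_j|$ you drop the factor $(\Im m\, w_j)^{-\alpha}$ present in your definition of $c_j$, which is precisely what converts $dV_\alpha$ on $D_j$ back to $dV$ and yields the bound by $\|f\|_{A^1_{\omega^{-k}}}$ rather than $\|f\|_{A^1_{\omega^{-k},\alpha}}$; with that factor retained (and $(\Im m\, w)^\alpha\simeq(\Im m\, w_j)^\alpha$ on $D_j$, as you note) the estimate closes as stated.
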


For $0\leq\varepsilon<t$, we consider the following function.
\begin{equation}\label{eq:atom1}
    f_\varepsilon(z)=\frac{(\Im m (w))^{t-\varepsilon}\omega^{-(l+k)} (w)}{(z-\overline w)^{2+t}}.
\end{equation} 
We have the following factorization.
\begin{proposition}\label{prop:factor0}
Let $l,k\in \mathbb{R}$. Let $f_0$ be an atom given by (\ref{eq:atom1}). Then there exist $g\in A_{\omega^{-lp}}^p(\C_+)$ and $\theta \in A_{\omega^{-kp}}^{p'}(\mathbb{C}_+)$, such that $$f_0=g\times \theta$$
with 
$$\Vert g\Vert_{A_{\omega^{-lp}}^p}\times \Vert \theta\Vert_{A_{\omega^{-kp}}^{p'}}\lesssim 1.$$
\end{proposition}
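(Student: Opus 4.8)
The plan is to write the atom $f_0$ as an explicit product of two factors, one designed to land in $A^p_{\omega^{-lp}}(\C_+)$ and the other in $A^{p'}_{\omega^{-kp}}(\C_+)$, splitting both the power of $(z-\overline w)$ in the denominator and the weight $\omega^{-(l+k)}(w)$ in the numerator according to the Hölder exponents $1/p + 1/p' = 1$. A natural first attempt is to set
\begin{equation}\label{eq:factorguess}
g(z)=\frac{(\Im m(w))^{(t-\varepsilon)/p}\,\omega^{-l}(w)}{(z-\overline w)^{(2+t)/p}},\qquad \theta(z)=\frac{(\Im m(w))^{(t-\varepsilon)/p'}\,\omega^{-k}(w)}{(z-\overline w)^{(2+t)/p'}},
\end{equation}
so that $g\cdot\theta=f_0$ and the numerator weights multiply back to $\omega^{-(l+k)}(w)$. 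The issue with \eqref{eq:factorguess} is twofold: the exponents $(2+t)/p$ need not match the "correct" decay exponent $\frac{2+\alpha}{p}$ of the model atoms for $A^p_\alpha(\C_+)$, and the factors $\omega^{-l}(w)$, $\omega^{-k}(w)$ are evaluated at $w$, not at $z$, so they must be traded for $\omega$-factors in $z$ using Lemma \ref{lem:weightforellirudin}.

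The key computational step is therefore to estimate $\|g\|_{A^p_{\omega^{-lp}}}^p = \int_{\C_+} |g(z)|^p \omega^{-lp}(z)\,dV(z)$, which after plugging in \eqref{eq:factorguess} becomes
\begin{equation*}
(\Im m(w))^{t-\varepsilon}\,\omega^{-lp}(w)\int_{\C_+}\frac{\omega^{-lp}(z)}{|z-\overline w|^{2+t}}\,dV(z),
\end{equation*}
and similarly for $\|\theta\|_{A^{p'}_{\omega^{-kp}}}^{p'}$ with $t$ replaced by $t$ again (since $(2+t)/p'\cdot p' = 2+t$) and $l$ by $k$. Here I would apply Lemma \ref{lem:weightforellirudin} with $\beta=0$, weight exponent $-lp$ (resp.\ $-kp$, after matching the form $\Omega=\omega^m$), and $\alpha$ chosen so that $2+\alpha = 2+t$, i.e.\ $\alpha=t$; this is legitimate since $t\geq 1>0$. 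The lemma then gives the bound $\lesssim (\Im m(w))^{-t}\omega^{-lp}(w)$, which cancels the $(\Im m(w))^{t-\varepsilon}\omega^{-lp}(w)$ prefactor up to the harmless factor $(\Im m(w))^{-\varepsilon}$ — and a symmetric computation handles $\theta$. Multiplying the resulting bounds $\|g\|_{A^p_{\omega^{-lp}}}\lesssim (\Im m(w))^{-\varepsilon/p}$ and $\|\theta\|_{A^{p'}_{\omega^{-kp}}}\lesssim (\Im m(w))^{-\varepsilon/p'}$... this suggests the exponents in \eqref{eq:factorguess} are not quite right, and one should instead distribute $(\Im m(w))^{t-\varepsilon}$ as, say, $(\Im m(w))^{t/p}$ and $(\Im m(w))^{t/p'}$ while absorbing the $-\varepsilon$ discrepancy elsewhere; getting this bookkeeping exactly right so that the product of norms is $\lesssim 1$ uniformly in $w$ is the crux.

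The main obstacle I anticipate is precisely this exponent bookkeeping: one must choose the split of both $(z-\overline w)^{2+t}$ and $(\Im m(w))^{t-\varepsilon}$ (and possibly introduce compensating factors like powers of $(i+z)$ to make each factor holomorphic and well-behaved at infinity, mirroring the test functions $\theta_w$ of Lemma \ref{lem: testTheta}) so that each integral produced is of exactly the form covered by Lemma \ref{lem:weightforellirudin} with a positive first parameter, and so that the two resulting $w$-dependent bounds are reciprocal and cancel. Once the correct split is identified, each of the two norm estimates is a single application of Lemma \ref{lem:weightforellirudin}, and the conclusion $\|g\|_{A^p_{\omega^{-lp}}}\cdot\|\theta\|_{A^{p'}_{\omega^{-kp}}}\lesssim 1$ follows immediately; I would also remark that, since $f_\varepsilon$ in \eqref{eq:atom1} for general $\varepsilon$ differs from $f_0$ only by the innocuous factor $(\Im m(w))^{-\varepsilon}$ (which is $\leq 1$ when $\Im m(w)\geq 1$ and is otherwise absorbed into the estimates), the same factorization handles the $\varepsilon$-shifted atoms needed later for Theorem \ref{thm:main4}.
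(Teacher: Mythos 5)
Your overall strategy is the paper's: factor the atom explicitly, splitting the exponent $2+t$ and the $w$-dependent prefactor according to $1/p+1/p'=1$, and then estimate each factor by a single application of Lemma \ref{lem:weightforellirudin} with $\beta=0$ and $\alpha=t>0$. The genuine problem is in the step you describe as a cancellation: with your split the $\omega(w)$-factors do not cancel, they compound. Putting $\omega^{-l}(w)$ in the numerator of $g$ gives
$\Vert g\Vert_{A^p_{\omega^{-lp}}}^p=(\Im m\,w)^{t}\,\omega^{-lp}(w)\int_{\C_+}\omega^{-lp}(z)\,|z-\overline w|^{-(2+t)}\,dV(z)$,
and Lemma \ref{lem:weightforellirudin} bounds the integral by $(\Im m\,w)^{-t}\omega^{-lp}(w)$ --- the \emph{same} power of $\omega(w)$ as the prefactor, not its reciprocal. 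So $\Vert g\Vert_{A^p_{\omega^{-lp}}}\simeq\omega^{-2l}(w)$, $\Vert \theta\Vert\simeq\omega^{-2k}(w)$, and the product of norms is $\simeq\omega^{-2(l+k)}(w)$, which is not uniformly bounded in $w$ unless $l+k\geq 0$. The correct choice --- the one the paper makes --- is to put the \emph{reciprocal} of each target weight, evaluated at $w$, in the numerator: $g(z)=(\Im m\,w)^{t/p}\,\omega^{l}(w)\,(z-\overline w)^{-(2+t)/p}$ and $\theta(z)=(\Im m\,w)^{t/p'}\,\omega^{k}(w)\,(z-\overline w)^{-(2+t)/p'}$; then each norm is $\lesssim 1$ on the nose. (This forces the atom to carry $\omega^{l+k}(w)$ rather than $\omega^{-(l+k)}(w)$, which is indeed what Proposition \ref{prop:atomicdecompo} applied to $A^1_{\omega^{-(l+k)}}$ produces; the exponent in \eqref{eq:atom1} is a sign slip that you inherited, but following it literally breaks the estimate.)

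Once that sign is fixed, the rest of your anticipated obstacles dissolve: the proposition concerns $f_0$, i.e.\ $\varepsilon=0$, so there is no residual $(\Im m\,w)^{-\varepsilon}$ to absorb, and no compensating $(i+z)$ powers or $\theta_w$ functions are needed (those belong to the $L^1\times L^\infty$ factorization of Proposition \ref{prop:factor1}, not here). As submitted, though, the argument is incomplete: you explicitly leave the exponent bookkeeping unresolved, and the one computation you do carry through asserts a cancellation of $\omega^{-lp}(w)$ against $\omega^{-lp}(w)$ that does not take place.
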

\begin{proof}
Consider the following functions
$$g(z)=\frac{\left(\Im m w\right)^{\frac tp}\omega^l(w)}{\left(z-\overline{w}\right)^{\frac {2+t}p}}\quad\text{and}\quad \theta(z)=\frac{\left(\Im m w\right)^{\frac t{p'}}\omega^k(w)}{\left(z-\overline{w}\right)^{\frac {2+t}{p'}}}.$$
Then clearly, $f_0=g\times\theta$ and
$\Vert g\Vert_{A_{\omega^{-lp}}^p}\lesssim 1$ and $\Vert \theta\Vert_{A_{\omega^{-kp'}}^{p'}}\lesssim 1$.
\end{proof}
Combining Proposition \ref{prop:atomicdecompo} and Proposition \ref{prop:factor0}, we obtain the following.
\begin{proposition}\label{prop:weakfacto0}
Let $l,k\in \mathbb{R}$. The following assertions are satisfied.
\begin{itemize}
\item[(a)] The product of a function $f\in A_{\omega^{-lp}}^p(\C_+)$ and  a function $g \in A_{\omega^{-kp'}}^{p'}(\mathbb{C}_+)$, belongs $A_{\omega^{-(l+k)}}^1(\C_+)$ and 
$$\Vert fg\Vert_{A_{\omega^{-(l+k)}}^1}\leq \Vert f\Vert_{A_{\omega^{-lp}}^p}\Vert g\Vert_{A_{\omega^{-kp'}}^{p'}}.$$
\item[(b)] Any function $f\in A_{\omega^{-(l+k)}}^1(\C_+)$ can be written as 
\begin{equation}\label{eq:weakfacto0}
f=\sum_{j=0}^\infty f_jg_j,\quad f_j\in A_{\omega^{-lp}}^p(\C_+)\quad\text{and}\quad g_j\in A_{\omega^{-kp'}}^{p'}(\mathbb{C}_+).
\end{equation}
Moreover,
\begin{equation}\label{eq:weakfacto00}
\sum_{j=0}^\infty\Vert f_j\Vert_{A_{\omega^{-lp}}^p}\Vert g_j\Vert_{A_{\omega^{-kp'}}^{p'}}\lesssim \Vert f\Vert_{A_{\omega^{-(l+k)}}^1}.
\end{equation}
\end{itemize}
\end{proposition}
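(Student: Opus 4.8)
The plan is to derive Proposition \ref{prop:weakfacto0} by combining the single-atom factorization of Proposition \ref{prop:factor0} with the atomic decomposition of Proposition \ref{prop:atomicdecompo}, together with H\"older's inequality for the ``easy'' inclusion (a). For part (a), given $f\in A_{\omega^{-lp}}^p(\C_+)$ and $g\in A_{\omega^{-kp'}}^{p'}(\C_+)$, I would write
$$\int_{\C_+}|f(z)g(z)|\omega^{-(l+k)}(z)\,dV(z)=\int_{\C_+}\left(|f(z)|\omega^{-l}(z)\right)\left(|g(z)|\omega^{-k}(z)\right)dV(z)$$
and apply H\"older with exponents $p,p'$; since $\omega^{-l}=\left(\omega^{-lp}\right)^{1/p}$ and $\omega^{-k}=\left(\omega^{-kp'}\right)^{1/p'}$, the two factors become $\Vert f\Vert_{A_{\omega^{-lp}}^p}$ and $\Vert g\Vert_{A_{\omega^{-kp'}}^{p'}}$ respectively, giving the stated bound. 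Holomorphy of $fg$ is automatic, so $fg\in A_{\omega^{-(l+k)}}^1(\C_+)$.

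For part (b), the strategy is: given $f\in A_{\omega^{-(l+k)}}^1(\C_+)$, apply Proposition \ref{prop:atomicdecompo} with the weight exponent $l+k$ in place of $k$ and with the dilation parameter chosen to be $\alpha=t$ for some fixed $t\geq 1$ (or more simply any admissible $\alpha>0$; one takes $\varepsilon=0$ in \eqref{eq:atom1} so that the atoms are exactly of the form $\frac{(\Im m\, w_j)^{t}\,\omega^{-(l+k)}(w_j)}{(z-\overline{w}_j)^{2+t}}$, up to the constant $c_j$). This yields
$$f(z)=\sum_{j=0}^\infty c_j\,\frac{(\Im m\, w_j)^{t}\,\omega^{-(l+k)}(w_j)}{(z-\overline{w}_j)^{2+t}},\qquad \sum_j|c_j|\simeq\Vert f\Vert_{A_{\omega^{-(l+k)}}^1}.$$
Each summand is $c_j$ times an atom of the form \eqref{eq:atom1} with $\varepsilon=0$ and center $w_j$, so Proposition \ref{prop:factor0} applies to each: write the $j$-th atom as $\widetilde g_j\times\widetilde\theta_j$ with $\widetilde g_j\in A_{\omega^{-lp}}^p(\C_+)$, $\widetilde\theta_j\in A_{\omega^{-kp'}}^{p'}(\C_+)$ and $\Vert\widetilde g_j\Vert_{A_{\omega^{-lp}}^p}\Vert\widetilde\theta_j\Vert_{A_{\omega^{-kp'}}^{p'}}\lesssim 1$ uniformly in $j$. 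Setting $f_j=c_j\widetilde g_j$ and $g_j=\widetilde\theta_j$ (absorbing the scalar $c_j$ into one factor) produces the decomposition \eqref{eq:weakfacto0}, and
$$\sum_{j=0}^\infty\Vert f_j\Vert_{A_{\omega^{-lp}}^p}\Vert g_j\Vert_{A_{\omega^{-kp'}}^{p'}}=\sum_{j=0}^\infty|c_j|\,\Vert\widetilde g_j\Vert_{A_{\omega^{-lp}}^p}\Vert\widetilde\theta_j\Vert_{A_{\omega^{-kp'}}^{p'}}\lesssim\sum_{j=0}^\infty|c_j|\simeq\Vert f\Vert_{A_{\omega^{-(l+k)}}^1},$$
which is \eqref{eq:weakfacto00}.

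The main point requiring care — the only real obstacle — is verifying the \emph{convergence} of the series $\sum_j f_jg_j$ to $f$ in the norm of $A_{\omega^{-(l+k)}}^1(\C_+)$ and not merely pointwise; this is where part (a) is used as a lemma inside part (b). Indeed, once (a) is established, the partial sums $S_N=\sum_{j\le N}f_jg_j$ satisfy $\Vert f-S_N\Vert_{A_{\omega^{-(l+k)}}^1}\le\sum_{j>N}\Vert f_j\Vert_{A_{\omega^{-lp}}^p}\Vert g_j\Vert_{A_{\omega^{-kp'}}^{p'}}$, which is the tail of a convergent series by \eqref{eq:weakfacto00}, hence $S_N\to f$ in $A_{\omega^{-(l+k)}}^1(\C_+)$. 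A minor bookkeeping subtlety is that Proposition \ref{prop:atomicdecompo} is stated with parameter name $\alpha$ and the atoms in \eqref{eq:atom1} use parameter $t$; one simply applies the former with $\alpha:=t$, which is legitimate since $t\geq 1>0$. Everything else is the routine H\"older computation and a direct invocation of the two quoted propositions.
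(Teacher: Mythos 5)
Your proposal is correct and follows essentially the same route as the paper: H\"older's inequality for (a), and for (b) the atomic decomposition of Proposition \ref{prop:atomicdecompo} combined with the single-atom factorization of Proposition \ref{prop:factor0}, yielding exactly the factors the paper writes down in \eqref{eq:testfacto0} (the paper splits the scalar as $c_j^{1/p}\cdot c_j^{1/p'}$ rather than absorbing it into one factor, which is immaterial). Your extra remark on norm convergence of the series via part (a) is a detail the paper leaves implicit but is welcome.
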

\begin{proof}
(a) is harmless. For assertion (b), using (\ref{eq:atomicdecompo}), it suffices to take 
\begin{equation}\label{eq:testfacto0}f_j(z)=c_j^{1/p}\frac{\left(\Im m w_j\right)^{\frac tp}\omega^l(w_j)}{\left(z-\overline{w_j}\right)^{\frac {2+t}p}}\quad\text{and}\quad g_j(z)=c_j^{1/p'}\frac{\left(\Im m w_j\right)^{\frac t{p'}}\omega^k(w_j)}{\left(z-\overline{w_j}\right)^{\frac {2+t}{p'}}}.
\end{equation}

\end{proof}
The atom in (\ref{eq:atom1}) can also be factorized as follows.
\begin{proposition}\label{prop:factor1}
Let $k\leq 0$, $0<\varepsilon< t$. Let $f:=f_\varepsilon$ be an atom given by (\ref{eq:atom1}). Then there exist $g\in A_{\omega^l}^1(\C_+)$ and $\theta \in H_{\omega^{k},\varepsilon}^{\infty}(\mathbb{C}_+)$, such that $$f=g\times \theta$$
with 
$$\Vert g\Vert_{A_{\omega^l}^1}\times \Vert \theta\Vert_{H_{\omega^k,\varepsilon}^\infty}\lesssim 1.$$
\end{proposition}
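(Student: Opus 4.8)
The plan is to produce an explicit factorization of the atom $f_\varepsilon$ directly, mimicking the structure of Proposition \ref{prop:factor0} but now with one factor taken in the Korenblum-type space $H_{\omega^{k},\varepsilon}^\infty(\C_+)$ instead of a Bergman space, and the other absorbing all the remaining mass in $A_{\omega^l}^1(\C_+)$. The natural candidate for the $H^\infty$-factor is a normalized power of the test function $\theta_w$ from Lemma \ref{lem: testTheta}, since part (iii) of that lemma tells us precisely that $z\mapsto \theta_w^{-k}(z)/(z-\overline w)^\varepsilon$ lies in $H_{\omega^{k},\varepsilon}^\infty(\C_+)$ with norm $\simeq 1$ uniformly in $w$. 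So I would set
\[
\theta(z)=\frac{\theta_w^{-k}(z)}{(z-\overline w)^{\varepsilon}},
\]
and then define $g:=f_\varepsilon/\theta$, i.e.
\[
g(z)=\frac{(\Im m\, w)^{t-\varepsilon}\,\omega^{-(l+k)}(w)}{(z-\overline w)^{2+t-\varepsilon}\,\theta_w^{-k}(z)}.
\]

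The substance of the proof is then to check that $g\in A_{\omega^l}^1(\C_+)$ with $\Vert g\Vert_{A_{\omega^l}^1}\lesssim 1$ uniformly in $w$; granting that, Proposition \ref{prop:factor1} follows, since $\Vert\theta\Vert_{H_{\omega^k,\varepsilon}^\infty}\simeq 1$ by Lemma \ref{lem: testTheta}(iii). To estimate the $A_{\omega^l}^1$-norm of $g$, I would write out
\[
\Vert g\Vert_{A_{\omega^l}^1}=(\Im m\, w)^{t-\varepsilon}\,\omega^{-(l+k)}(w)\int_{\C_+}\frac{\omega^{-l}(z)}{|z-\overline w|^{2+t-\varepsilon}\,|\theta_w(z)|^{-k}}\,dV(z),
\]
and use the two-sided estimate $|\theta_w(z)|\simeq 1+\ln_+|z|+\ln_+\frac{1}{|z-\overline w|}$ from Lemma \ref{lem: testTheta}(ii). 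Since $k\leq 0$, the factor $|\theta_w(z)|^{k}$ is bounded above by a constant, so one gets
\[
\Vert g\Vert_{A_{\omega^l}^1}\lesssim (\Im m\, w)^{t-\varepsilon}\,\omega^{-(l+k)}(w)\int_{\C_+}\frac{\omega^{-l}(z)\,\bigl(1+\ln_+|z|+\ln_+\tfrac{1}{|z-\overline w|}\bigr)^{k}}{|z-\overline w|^{2+t-\varepsilon}}\,dV(z).
\]
When $l\leq 0$ the weight $\omega^{-l}$ is $\lesssim$ a power of the same logarithmic expression, and the integral is of exactly the type controlled by Lemma \ref{lem:weightforellirudin} with $\beta=0$ and $\alpha=t-\varepsilon>0$, yielding a bound $\lesssim(\Im m\, w)^{-(t-\varepsilon)}\Omega(w)$ with $\Omega$ a logarithmic weight matching $\omega^{-(l+k)}(w)$ up to constants; the prefactor then cancels everything and leaves $\lesssim 1$. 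For general $l\in\R$ one splits $\omega^{-l}=\omega^{-(l+k)}\cdot\omega^{k}$ (with $\omega^k\lesssim|\theta_w|^k$ absorbed into the logarithmic factor, or handled by a further comparison $\omega(z)\lesssim\omega(w)\cdot(\text{local factor})$ near $z=\overline w$ together with a direct estimate away from it), so that the logarithmic part to be integrated again matches the hypotheses of Lemma \ref{lem:weightforellirudin}.

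The main obstacle I anticipate is the bookkeeping of the logarithmic weights: Lemma \ref{lem:weightforellirudin} is stated for $\Omega(z)$ a single power of the $\omega$-type expression, whereas after dividing by $\theta_w^{-k}$ one has a product $\omega^{-l}(z)\cdot|\theta_w(z)|^{k}$ in the integrand, and $|\theta_w(z)|$ depends on $w$ through the $\ln_+\frac{1}{|z-\overline w|}$ term. The clean way around this is the observation that $\ln_+\frac{1}{|z-\overline w|}$ only helps (it makes $|\theta_w(z)|^{k}$ smaller, since $k\le 0$), so one may simply bound $|\theta_w(z)|^{k}\lesssim (1+\ln_+|z|+\ln_+\frac{1}{\Im m\, z})^{k}=\omega(z)^{k}$ — wait, that inequality goes the wrong way when $\ln_+\frac1{|z-\overline w|}$ is large; instead one keeps $|\theta_w(z)|^k\le C$ globally (enough when $l\le 0$) and, for the case $l>0$, exploits that on the region $|z-\overline w|\le\Im m\, w$ one has $\omega(z)\simeq\omega(w)$ while on the complementary region $|z-\overline w|^{-(2+t-\varepsilon)}$ already gives enough decay to absorb a bounded power of $\omega(z)$ via Lemma \ref{lem:weightforellirudin}. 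Carrying out this dichotomy carefully, with the constants shown independent of $w$, is the one genuinely technical point; everything else is the elementary identity $f_\varepsilon=g\theta$ and an appeal to Lemma \ref{lem: testTheta}.
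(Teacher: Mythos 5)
Your factorization is exactly the paper's: the same $\theta(z)=\theta_w^{-k}(z)/(z-\overline w)^{\varepsilon}$ and the same $g=f_\varepsilon/\theta$, with Lemma \ref{lem: testTheta}(iii) handling the $H_{\omega^k,\varepsilon}^\infty$ factor. The paper does not carry out the estimate $\Vert g\Vert_{A_{\omega^l}^1}\lesssim 1$ either; it defers it to \cite[Proposition 1.12]{BGS}.

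Your sketch of that estimate, however, contains a genuine error. First, a bookkeeping slip: with the paper's convention $\Vert g\Vert_{A^1_{\omega^l}}=\int_{\C_+}|g|\,\omega^{l}\,dV$, the weight in your integrand should be $\omega^{l}(z)$, not $\omega^{-l}(z)$. More importantly, the claim that the crude bound $|\theta_w(z)|^{k}\leq C$ suffices (in particular for $l\leq 0$) is false for every $k<0$, independently of $l$: discarding $\theta_w$ entirely and applying Lemma \ref{lem:weightforellirudin} with $\Omega=\omega^{l}$, $\alpha=t-\varepsilon$, $\beta=0$ gives
\begin{equation*}
\Vert g\Vert_{A^1_{\omega^l}}\lesssim (\Im m\, w)^{t-\varepsilon}\,\omega^{-(l+k)}(w)\cdot(\Im m\, w)^{-(t-\varepsilon)}\,\omega^{l}(w)=\omega^{-k}(w),
\end{equation*}
which is unbounded in $w$ when $k<0$. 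The factor $\omega^{k}(w)$ needed to cancel the $\omega^{-k}(w)$ in the prefactor can only come from a \emph{lower} bound on $|\theta_w(z)|$, namely $|\theta_w(z)|\gtrsim 1+\ln_+|w|+\ln_+(1/\Im m\, w)\simeq\omega(w)$ on the region $|z-\overline w|\simeq\Im m\, w$ where the integral concentrates (this is precisely why $\theta_w$ appears in the construction at all, rather than a plain power of $(z-\overline w)$). The dichotomy you describe at the end — near versus far from $w$ — is the right mechanism, but it is needed to produce this gain for all $k<0$ and all $l$, not merely to control $\omega^{l}(z)$ when $l>0$. As written, the case you declare "enough" is exactly the case where the argument breaks.
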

\begin{proof}
We take $\theta(z)=\frac{\theta_w^{-k}(z)}{(z-\overline{w})^\varepsilon}$, where $\theta_w$ is the function given in Lemma \ref{lem: testTheta}. The function $g$ is then given as follows $$g(z)=g_w(z)=\frac{(\Im m (w))^{t-\varepsilon}\omega^{-(l+k)} (w)}{(z-\overline w)^{2+t-\varepsilon}\theta_w^{-k}(z)}.$$ We know from Lemma \ref{lem: testTheta} that $\theta\in H_{\omega^k,\varepsilon}^{\infty}(\mathbb{C}_+)$ with $\Vert \theta\Vert_{H_{\omega^k,\varepsilon}^{\infty}}\lesssim 1$. That $\Vert g\Vert_{A_{\omega^l}^1}\lesssim 1$ is obtained as in \cite[Proposition 1.12]{BGS}.
\end{proof}
Combining Proposition \ref{prop:atomicdecompo} and Proposition \ref{prop:factor1}, we obtain the following.
\begin{proposition}\label{prop:weakfacto1}
Let $l\mathbb{R}$, $k\leq 0$, and $0<\varepsilon<t$. The following assertions are satisfied.
\begin{itemize}
\item[(a)] The product of a function $f\in A_{\omega^{l}}^1(\C_+)$ and  a function $g \in H_{\omega^{k},\varepsilon}^{\infty}(\mathbb{C}_+)$, belongs to such that $A_{\omega^{(l+k)},\varepsilon}^1(\C_+)$ and 
$$\Vert fg\Vert_{A_{\omega^{(l+k)},\varepsilon}^1}\leq \Vert f\Vert_{A_{\omega^{l}}^1}\Vert g\Vert_{H_{\omega^{k},\varepsilon}^{\infty}}.$$
\item[(b)] Any function $f\in A_{\omega^{(l+k)}}^1(\C_+)$ can be written as 
\begin{equation}\label{eq:weakfacto1}
f=\sum_{j=0}^\infty f_jg_j,\quad f_j\in A_{\omega^{l}}^p(\C_+)\quad\text{and}\quad g_j\in H_{\omega^{k},\varepsilon}^{\infty}(\mathbb{C}_+).
\end{equation}
Moreover,
\begin{equation}\label{eq:weakfacto11}
\sum_{j=0}^\infty\Vert f_j\Vert_{A_{\omega^{l}}^1}\Vert g_j\Vert_{H_{\omega^{k},\varepsilon}^{\infty}}\lesssim \Vert f\Vert_{A_{\omega^{(l+k)}}^1}.
\end{equation}
\end{itemize}
\end{proposition}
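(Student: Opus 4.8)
The plan is to follow the structure of the proof of Proposition~\ref{prop:weakfacto0}: assertion (a) is a direct pointwise estimate, while assertion (b) is obtained by running an atomic decomposition and then factoring each atom with Proposition~\ref{prop:factor1}. For (a), the product $fg$ is holomorphic, and by the definition of the $H_{\omega^{k},\varepsilon}^{\infty}(\C_+)$-norm one has $|g(z)|\le\Vert g\Vert_{H_{\omega^{k},\varepsilon}^{\infty}}(\Im m z)^{-\varepsilon}\omega^{-k}(z)$ for every $z\in\C_+$; inserting this into $\Vert fg\Vert_{A_{\omega^{l+k},\varepsilon}^{1}}=\int_{\C_+}|f(z)g(z)|\omega^{l+k}(z)(\Im m z)^{\varepsilon}\,dV(z)$ makes the factors $(\Im m z)^{\varepsilon}$ and $\omega^{k}(z)$ cancel, leaving $\Vert g\Vert_{H_{\omega^{k},\varepsilon}^{\infty}}\int_{\C_+}|f(z)|\omega^{l}(z)\,dV(z)=\Vert g\Vert_{H_{\omega^{k},\varepsilon}^{\infty}}\Vert f\Vert_{A_{\omega^{l}}^{1}}$, so (a) follows.

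For (b), the goal is to write $f$ as a sum of scalar multiples of the model atoms $f_{\varepsilon}$ of~(\ref{eq:atom1}). Since $0\le\varepsilon<t$ and $t\ge1$, Proposition~\ref{prop:boudednessPbeta} shows that $P_{t}$ reproduces the elements of $A_{\omega^{l+k},\varepsilon}^{1}(\C_+)$, so $f(z)=c_{t}\int_{\C_+}\frac{f(w)}{(z-\overline w)^{2+t}}(\Im m w)^{t}\,dV(w)$. Discretizing this integral along a lattice of $\C_+$ and estimating the remainder with Lemma~\ref{lem:weightforellirudin}, by the same argument that yields Proposition~\ref{prop:atomicdecompo}, I would obtain $\{c_{j}\}\subset\C$ and $\{w_{j}\}\subset\C_+$ with
\[ f(z)=\sum_{j\ge0}c_{j}\,\frac{(\Im m w_{j})^{t-\varepsilon}\omega^{-(l+k)}(w_{j})}{(z-\overline{w_{j}})^{2+t}},\qquad \sum_{j\ge0}|c_{j}|\simeq\Vert f\Vert_{A_{\omega^{l+k},\varepsilon}^{1}}. \]
By Proposition~\ref{prop:factor1}, each summand factors as $g_{w_{j}}\theta_{w_{j}}$ with $g_{w_{j}}\in A_{\omega^{l}}^{1}(\C_+)$, $\theta_{w_{j}}\in H_{\omega^{k},\varepsilon}^{\infty}(\C_+)$ and $\Vert g_{w_{j}}\Vert_{A_{\omega^{l}}^{1}}\Vert\theta_{w_{j}}\Vert_{H_{\omega^{k},\varepsilon}^{\infty}}\lesssim1$ uniformly in $j$. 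Setting $f_{j}:=c_{j}g_{w_{j}}$ and $g_{j}:=\theta_{w_{j}}$ then gives~(\ref{eq:weakfacto1}) together with $\sum_{j}\Vert f_{j}\Vert_{A_{\omega^{l}}^{1}}\Vert g_{j}\Vert_{H_{\omega^{k},\varepsilon}^{\infty}}=\sum_{j}|c_{j}|\,\Vert g_{w_{j}}\Vert_{A_{\omega^{l}}^{1}}\Vert\theta_{w_{j}}\Vert_{H_{\omega^{k},\varepsilon}^{\infty}}\lesssim\sum_{j}|c_{j}|\simeq\Vert f\Vert$, which is~(\ref{eq:weakfacto11}).

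The step I expect to cost the most care is the atomic decomposition: Proposition~\ref{prop:atomicdecompo} as stated produces ``balanced'' atoms of the shape $(\Im m w_{j})^{\alpha}(z-\overline{w_{j}})^{-(2+\alpha)}$, whereas the atoms~(\ref{eq:atom1}) needed here carry exponent $t-\varepsilon$ in the numerator against $2+t$ in the denominator. This forces the discretization to be carried out on the $(\Im m z)^{\varepsilon}$-weighted Bergman space $A_{\omega^{l+k},\varepsilon}^{1}(\C_+)$ rather than on an ordinary $\omega$-power-weighted Bergman space; the convergence of the discretized series and the control of the error term are nevertheless still exactly of the type supplied by Lemma~\ref{lem:weightforellirudin}, so this amounts to bookkeeping with the parameters rather than to a new difficulty. (For consistency with~(a), the space appearing in~(b) and in~(\ref{eq:weakfacto11}) should be read as $A_{\omega^{l+k},\varepsilon}^{1}(\C_+)$.)
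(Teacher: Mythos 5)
Your argument is correct and is exactly what the paper intends: the paper gives no written proof beyond the remark that the result follows by combining Proposition~\ref{prop:atomicdecompo} (run on the $(\Im m z)^{\varepsilon}$-weighted space, so that the resulting atoms are precisely those of (\ref{eq:atom1})) with the factorization of Proposition~\ref{prop:factor1}, and your part (a) is the same direct pointwise estimate. Your observation that the space in (b) and in (\ref{eq:weakfacto11}) should be read as $A^{1}_{\omega^{l+k},\varepsilon}(\C_+)$ is also right; this is the version actually needed in the proof of Theorem~\ref{thm:main4}, where the test functions come from the predual $A^{1}_{\omega^{l+k},\varepsilon}(\C_+)$ of $H^{\infty}_{\omega^{-(l+k)},t-\varepsilon}(\C_+)$ provided by Proposition~\ref{prop:dualA1}.
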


\medskip

\subsection{Proof of the main results}
Let us start with the proof of Theorem \ref{thm:main2}.
\medskip

\begin{proof}[Proof of Theorem \ref{thm:main2}]
We first assume that $b\in H_{\omega^{l+k},t}^\infty(\C_+)$. Recall that the dual space of $A_{\omega^{kp},tp}^p(\C_+)$ is $A_{\omega^{-kp'}}^{p'}(\C_+)$ under the duality pairing (\ref{eq:innerprod1}). We have that the product of a function $f\in A_{\omega^{-lp}}^p(\C_+)$ and a function $g\in A_{\omega^{-kp'}}^{p'}(\C_+)$ belongs to $A_{\omega^{-(l+k)}}^1(\C_+)$. We also have that the dual space of $A_{\omega^{-(l+k)}}^1(\C_+)$ under the pairing (\ref{eq:innerprod1}) is $H_{\omega^{l+k},t}^\infty(\C_+)$. As $P_t$ reproduces functions in $A_{\omega^{-kp'}}^{p'}(\C_+)$, we obtain that
\begin{eqnarray*} \vert\langle h_b^t(f),g\rangle_t\vert=\vert\langle b,fg\rangle_t\vert &\leq& \Vert b\Vert_{H_{\omega^{l+k},t}^\infty}\Vert fg\Vert_{A_{\omega^{-(l+k)}}^1}\\ &\leq& \Vert b\Vert_{H_{\omega^{l+k},t}^\infty}\Vert f\Vert_{A_{\omega^{-lp}}^p}\Vert g\Vert_{A_{\omega^{-kp'}}^{p'}}.
\end{eqnarray*}
Thus for $b\in H_{\omega^{l+k},t}^\infty(\C_+)$, $h_b^t$ is bounded from $A_{\omega^{-lp}}^p(\C_+)$ to $A_{\omega^{kp},tp}^p(\C_+)$. Moreover,
$\Vert h_b^t\Vert_{A_{\omega^{-lp}}^p\longrightarrow A_{\omega^{kp},tp}^p}\leq \Vert b\Vert_{H_{\omega^{l+k},t}^\infty}.$
\medskip

For the converse statement, we assume that $h_b^t$ is well defined and bounded from $A_{\omega^{-lp}}^p(\C_+)$ to $A_{\omega^{kp},tp}^p(\C_+)$ and we prove that necessarily, $b\in H_{\omega^{l+k},t}^\infty(\C_+).$  By Proposition \ref{prop:dualA1}, it is sufficient to prove that, for $F$ in a dense subset of $A^1_{\omega^{-(l+k)}}(\C_+)$, we have \begin{equation} \label{forlog}
    |\langle b, F\rangle _t|=\left|\int_{\C_+}b(z)\overline{F(z)}(\Im m z)^t dV(z)\right|\leq C\|F\|_{A^1_{\omega^{-(l+k)}}}
\end{equation} for some uniform constant $C>0$.  
We consider the dense subset of functions in $A^1_{\omega^{-(l+k)}}(\C_+)$ with finite atomic decomposition,
$$F=\sum_{finite} f_jg_j,$$ 
where $f_j$ and $g_j$ are given in (\ref{eq:testfacto0})
with $\displaystyle\sum_{finite} \Vert f_j\Vert_{A_{\omega^{-lp}}^p}\Vert g_j\Vert_{A_{\omega^{-kp'}}^{p'}}\leq C \|F\|_{A^1_{\omega^{-(l+k)}}}.$
We use the above to conclude that $$\langle b, F\rangle _t=\sum_{finite} \langle b, f_jg_j\rangle _{t}.$$
The boundedness of $h_b^t$ from $A_{\omega^{-lp}}^p(\C_+)$ to $A_{\omega^{kp},tp}^p(\C_+)$ implies that
$$|\langle b, f_jg_j\rangle_t|=|\langle h_b^t( f_j), g_j\rangle_t|\leq C $$
for some uniform constant $C.$ Inequality \eqref{forlog} then follows from this fact.
The proof is complete.
\end{proof}
\begin{remark}\label{rem:remark1}
\medskip

\begin{itemize}
\item[(a)] We have that $$h_b^t=h_{P_tb}^t\quad\text{on}\quad A_{-lp}^p(\C_+).$$ Indeed, for $f\in A_{\omega^{-lp}}^p(\C_+)$, we have
\begin{eqnarray*}
h_{P_tb}^tf(z) &=&\int_{\C_+}P_tb(w)\overline{\left(\frac{f(w)}{(\overline{z}-w)^{2+t}}\right)}dV_t(w)\\ &=& \int_{\C_+}b(w)\overline{P_t\left(\frac{f(w)}{(\overline{z}-w)^{2+t}}\right)}dV_t(w).
\end{eqnarray*}
Now, put $g(w)=\frac{f(w)}{(\overline{z}-w)^{2+t}}$. As $f\in A_{\omega^{-lp}}^p(\C_+)$, $g\in A_{\omega^{-lp},(2+t)p}^p(\C_+)$. As $P_t$ reproduces the functions in $A_{\omega^{-lp},(2+t)p}^p(\C_+)$, we conclude that $P_tg=g$. Hence $h_{P_tb}^tf=h_b^tf$.
\item[(b)] From the above observation and Theorem \ref{thm:main2}, we can conclude that $$b\in H_{\omega^{l+k},t}^\infty(\C_+)\quad\text{if and only if}\quad P_tb\in H_{\omega^{l+k},t}^\infty(\C_+).$$
\end{itemize}
\end{remark}
Let us prove the following kind of integration by parts.
\begin{lemma}\label{lem:integparts}
Let $k,l\in\R$. Let $b$ be a holomorphic function such that $b=Pb$, and let $1\leq p<\infty$. Then there is a constant $c\in \mathbb{R}$ such that for all  $f\in A^1(\C_+)\cap A_{\omega^l}^p(\C_+)$,
\begin{equation}\label{eq:integparts}
\int_{\C_+}f(z)\overline{b(z)}dV(z)=
c\int_{\C_+}f(z)\overline{b'(z)}\left(\Im mz \right)dV(z)
\end{equation}
whenever the left hand-side makes sense.
\end{lemma}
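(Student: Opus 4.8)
The identity (\ref{eq:integparts}) is a statement about a single holomorphic function $b$ with $b=Pb$, so I would first make sense of the right-hand side and then reduce the whole thing to a reproducing-kernel identity, differentiated once in the base point. The natural starting point is that $b=Pb$ means
$$b(z)=c_0\int_{\C_+}\frac{b(w)}{(z-\overline w)^2}\,dV(w),$$
and differentiating in $z$ gives $b'(z)=-2c_0\int_{\C_+}\frac{b(w)}{(z-\overline w)^3}\,dV(w)$, i.e. $b'=P_1$ applied (up to a constant) to the same function, which is essentially the remark recorded earlier that differentiation intertwines $P_0$ on $A^p$ with $P_1$ on $A^p_{p}$. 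So the cleanest route is: write $\langle f,b\rangle=\langle f,Pb\rangle=\langle Pf,b\rangle$ when $f\in A^1$ (using that $P$ reproduces $A^1$, which holds here since $\alpha=0$), and separately write the right-hand side of (\ref{eq:integparts}) as $c\langle f,b'\rangle_1=c\langle f, P_1(\text{something})\rangle_1$; then both sides are integrals of $f$ against a kernel in $w$, and one compares the two kernels.

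**Key steps, in order.** First, fix the constant: I expect $c$ to be determined by $c_1/c_0$ or a similar ratio of the normalizing constants $c_\alpha$ in the Bergman kernels $K_0$ and $K_1$, together with the factor $2$ from $\partial_z (z-\overline w)^{-2}$. Concretely, I would use that for $g\in A^1(\C_+)$ one has, by Corollary \ref{cor:hardyineq} type reasoning (the case $k=0$, $\beta=0$ of the reproducing statements), $g=P_1 g$ need not hold, but $g(w)$ can be recovered from $g'$ up to an additive constant; since constants are not in $A^1$, the map $g\mapsto g'$ from $A^1$ to $A^1_1$ is injective with a bounded-below inverse. Second, I would establish the pointwise/kernel identity
$$\frac{1}{(z-\overline w)^{2}} \;=\; c\,\partial_w\!\left[\frac{1}{(z-\overline w)^{2}}\right]\!\cdot(\text{correction})$$
— more precisely, integrate by parts in the $w$-integral $\int_{\C_+} f(z)\overline{b(z)}\,dV(z)=\int f(z)\overline{Pb(z)}\,dV(z)$, moving the $\overline w$-derivative off the kernel and onto $b$, picking up $\Im m\, w$ from integrating $dV$ against $\partial_{\overline w}$ in the half-plane (this is where the weight $(\Im m z)$ on the right-hand side of (\ref{eq:integparts}) comes from: $\partial_{\overline w}$ acting on $y$ produces the $y$-weight). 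Third, I would justify interchanging the order of integration (Fubini) and the integration by parts: this is precisely where the hypothesis $f\in A^1(\C_+)\cap A^p_{\omega^l}(\C_+)$ and the caveat "whenever the left-hand side makes sense" are used — absolute convergence of the double integral follows from $f\in A^1$ together with the pointwise bound (\ref{eq:pointwiseberg}) on $b$ (membership $b=Pb$ forces $b\in A^\infty_?$-type growth control) and the Forelli–Rudin estimate Lemma \ref{lem:weightforellirudin}.

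**Main obstacle.** The analytic heart is the integration-by-parts / boundary-term control: passing the $\partial_{\overline w}$ from the Bergman kernel onto $b$ in the integral over $\C_+$ generates a boundary term on $\R=\partial\C_+$ and at infinity, and I must argue these vanish. For the term at infinity I would use the decay of $(z-\overline w)^{-2}$ and integrability of $f$; for the boundary term on $\R$ I would use that $y^0=1$ at the boundary is replaced, after the by-parts, by $y$, which vanishes on $\R$ — so choosing to integrate the measure factor rather than differentiate it is exactly what kills the boundary contribution, and this dictates both the appearance of $\Im m\,z$ and the value of $c$. A secondary but routine point is checking that $b'$ genuinely satisfies $b'=P_1 b'$-type reproducing behaviour so that the right-hand side is itself expressible through a kernel against $b$ (not $b'$), closing the comparison; this follows from $b=Pb$ by differentiating under the integral sign, justified again by Lemma \ref{lem:weightforellirudin}. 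Once the two kernel representations agree, (\ref{eq:integparts}) is immediate, and the constant $c$ is the one extracted in the first step.
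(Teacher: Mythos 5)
Your proposal mixes two different strategies, and the one you single out as ``the analytic heart'' --- a literal Stokes-type integration by parts in $w$ with boundary terms on $\R$ and at infinity --- does not close as described. If you integrate by parts in $y=\Im m\,w$, the derivative $\partial_y\bigl(f\overline{b}\bigr)=if'\overline{b}-if\overline{b'}$ produces \emph{two} terms, so you obtain
$\int_{\C_+}f\overline{b}\,dV=-i\int_{\C_+}yf'\overline{b}\,dV+i\int_{\C_+}yf\overline{b'}\,dV$,
and the unwanted term $\int yf'\overline{b}\,dV$ is an identity of exactly the same type as the one you are trying to prove; the by-parts relation alone is consistent with the claim but does not determine the constant $c$ or eliminate that term. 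Your account of where the weight $\Im m\,z$ comes from (``$\partial_{\overline w}$ acting on $y$'') is tied to this incomplete route. A second, smaller issue: you invoke ``$P$ reproduces $A^1$'' for $P=P_0$; within the paper's toolkit (Proposition \ref{prop:boudednessP1}, condition \eqref{eq:boundednessbergcondit}) reproduction in $A^1$-type spaces is only available for $P_\alpha$ with $\alpha>0$, so you should use $P_1f=f$ instead.

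The paper avoids all boundary-term analysis. Its proof rests on three algebraic ingredients plus Fubini: $\int_{\C_+}f\,dV=0$ for $f\in A^1(\C_+)$ (which kills the additive constant arising when $b$ is reconstructed from $b'$), $P_1f=f$ for $f\in A_{\omega^l}^p(\C_+)$, and $P_2(b')=c_0b'$, whence $b=c_1P_1\bigl((\Im m\cdot)\,b'\bigr)+c_2$; the weight $\Im m\,z$ then enters simply as the measure $dV_1$ in $P_1$, not from a boundary computation. The first part of your ``key steps'' (differentiate $b=Pb$ under the integral sign to write $b'$ as a kernel integral against $b$, then apply Fubini and the reproducing property of $P_1$ on $f$) is in fact the same mechanism run in the opposite direction, and it can be completed without the mean-zero ingredient; but as written your proposal never executes that computation, instead deferring the crux to the kernel comparison --- note the kernels $(z-\overline{w})^{-2}$ and $(\Im m\,z)(z-\overline{w})^{-3}$ are not pointwise proportional, only the operators they induce on holomorphic functions agree --- and to the flawed boundary-term argument. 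As it stands there is a genuine gap.
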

\begin{proof}
To obtain (\ref{eq:integparts}), we use the following ingredients:
\begin{itemize}
\item $\int_{\C_+}f(z)dV(z)=0$ for $f\in A^1(\C_+)$;
\item $P_1f=f$ for any $f\in A_{\omega^l}^p(\C_+)$;
\item $P_2(b')=c_0b'$ so that $b=c_1P_1((\Im m\cdot)b')+c_2$ for some $c_j\in \C$, $j=0,1,2$.
\end{itemize}
It follows from the above and Fubini's theorem that
\begin{eqnarray*}
\int_{\C_+}f(z)\overline{b(z)}dV(z) &=& c_1\int_{\C_+}f(z)\overline{P_1((\Im m\cdot)b')(z)}dV(z)\\ &=& c_1\int_{\C_+}\left(\int_{\C_+}\frac{f(z)}{\left(w-\bar{z}\right)^3}dV(z)\right)\overline{b'(w)}\left(\Im m w\right)^2dV(w)\\ &=& C\int_{\C_+}\left(Pf\right)'(w)\overline{b'(w)}\left(\Im m w\right)^2dV(w)\\ &=& C\int_{\C_+}\left(P_1f\right)'(w)\overline{b'(w)}\left(\Im m w\right)^2dV(w)\\ &=& C\int_{\C_+}f(z)\overline{P_2b'(z)}\left(\Im m z\right)dV(z)\\ &=& C\int_{\C_+}f(z)\overline{b'(z)}\left(\Im mz \right)dV(z).
\end{eqnarray*}
\end{proof}
Let us prove Corollary \ref{cor:main2}.
\begin{proof}[Proof of Corollary \ref{cor:main2}]
If we assume that $b\in \mathcal{B}_{\omega^{l+k}}(\C_+)$ with $b=Pb$, then  using Lemma  \ref{lem:integparts}, one obtains that for some constant $c$,
\begin{equation}\label{eq:diffhankel}
\frac{d}{dz}\left(h_{b}f(z)\right)=c\left(h_{b'}^1f\right)(z)\quad\text{for all}\quad f\in  A_{\omega^{-lp}}^p(\C_+).
\end{equation}
Indeed, we have \begin{eqnarray*}\overline{\frac{d}{dz}\left(h_{b}f(z)\right)} &=& 
c\int_{\C_+}\frac{f(w)}{\left(\overline{z}-w\right)^3}\overline{b(w)}dV(w).
\end{eqnarray*}
It suffices then to see using H\"older's inequality and Lemma \ref{lem:weightforellirudin}, that the function $w\mapsto \frac{f(w)}{\left(\overline{z}-w\right)^3}$ is in $A^1(\C_+)\cap A_{\omega^s}^q(\C_+)$ for some $q>1$ and $s\in\R$. Also, using the pointwise estimates in Lemma \ref{lem:pointwisebloch}, H\"older's inequality and Lemma \ref{lem:weightforellirudin}, one obtains that for each $z\in\C_+$ fixed, $P_1\left(b\overline{f}\right)(z)$ makes sense.
\medskip

It follows using the previous proposition that for $b$ holomorphic,
\begin{eqnarray*}
b\in \mathcal{B}_{\omega^{l+k}}(\C_+) &\Longleftrightarrow& b'\in H_{\omega^{l+k},1}^\infty(\C_+)\\ &\Longleftrightarrow& h_{b'}^1f\in A_{\omega^{kp},p}^p(\C_+)\quad\text{for all}\quad f\in A_{\omega^{-lp}}^p(\C_+)\\ &\Longleftrightarrow& \left(h_{b}f\right)'\in A_{\omega^{kp},p}^p(\C_+)\quad\text{for all}\quad f\in A_{\omega^{-lp}}^p(\C_+)\\ &\Longleftrightarrow& h_{b}f\in A_{\omega^{kp}}^p(\C_+)\quad\text{for all}\quad f\in A_{\omega^{-lp}}^p(\C_+).
\end{eqnarray*}
The proof is complete.
\end{proof}
Let us prove Theorem \ref{thm:main4}.
\begin{proof}[Proof of Theorem \ref{thm:main4}]
We first assume that $b\in H_{\omega^{-(l+k)},t-\varepsilon}^\infty(\C_+)$.  Then, we have that for any $f\in A_{\omega^l}^1(\C_+)$, the product $b\overline{f}$ belongs to $L_{\omega^{-k},t-\varepsilon}^1(\C_+)$ on which $P_t$ is defined and bounded.
\medskip

Let us show that $h_b^t(f)$ belongs to $A_{\omega^{-k},t-\varepsilon}^1(\C_+)$ for $f\in A_{\omega^l}^1(\C_+)$. By Proposition \ref{prop:dualA1}, it is enough to test it against functions $g\in H_{\omega^k,\varepsilon}^\infty(\C_+)$. We get again
$$\langle h_b^t(f),g\rangle_t=\langle P_t(b\overline{f}),g\rangle_t=\langle b\overline{f},P_tg\rangle_t=\langle b\overline{f},g\rangle_t=\langle b,fg\rangle_t.$$
We observe that for $f\in A_{\omega^l}^1(\C_+)$ and $g\in H_{\omega^k,\varepsilon}^\infty(\C_+)$, the product $fg$ belongs to $A_{\omega^{l+k},\varepsilon}^1(\C_+)$. It follows that
\begin{eqnarray*}
\vert \langle h_b^t(f),g\rangle_t\vert &\leq& \Vert b\Vert_{L_{\omega^{-(l+k)},t-\varepsilon}^\infty}\Vert fg\Vert_{L_{\omega^{l+k},\varepsilon}^1}\\ &\leq& \Vert b\Vert_{H_{\omega^{-(l+k)},t-\varepsilon}^\infty}\Vert f\Vert_{L_{\omega^l}^1}\Vert g\Vert_{L_{\omega^k,\varepsilon}^\infty}.
\end{eqnarray*}
Conversely, suppose that $b$ is such that $h_b^t(f)$ is well defined for any $f\in A_{\omega^l}^1(\C_+)$ and $h_b^t$ is bounded from this space to $A_{\omega^{-k},t-\varepsilon}^1(\C_+)$. Then proceeding as for Theorem \ref{thm:main2}, using Proposition \ref{prop:weakfacto1}, one obtains that
$b\in H_{\omega^{-(l+k)},t-\varepsilon}^\infty(\C_+)$. 
The proof is complete.
\end{proof}
\section{conclusion}
In this paper, we have obtained a characterization in terms of symbols of bounded Hankel operators between weighted Bergman spaces, of functions in weighted Korenblum spaces and weighted Bloch spaces that are reproduced by the Bergman projection. Our aim was not to study Hankel operators on their own and hence we focused only on cases that provide equivalent definition of Korenblum and Bloch spaces.
\medskip

The author would like to thank Aline Bonami and Sandrine Grellier for fruitfull discussions on the topics of this paper.
\bibliographystyle{plain}

\end{document}